\newcommand{\sss}   { \scriptscriptstyle } 
\newcommand{\nn}{\nonumber}
\newcommand{\R}{{\mathbb R}}
\newcommand{\Z} {{\mathbb Z}}
\newcommand{\N}{{\mathbb N}}
\newcommand{\mP}{\mathbb{P}}
\newcommand{\E}{\mathbb{E}}
\newtheorem{THM}{Theorem}
\newtheorem{COR}[THM]{Corollary}
\newtheorem{EXA}{Example}
\newtheorem{LEM}[THM]{Lemma}
\newtheorem{PRP}[THM]{Proposition}
\newtheorem{COND}{Condition}
\newcommand{\hlf}{\frac{1}{2}}
\newcommand{\ra}{\rightarrow}
\renewcommand{\to}      {\rightarrow}
\newcommand{\blank}[1]{}
\newcommand{\Qed}{\qed \medskip}
\newcommand{\mc}[1]{\mathcal{#1}}
\newcommand{\smallE}{\scriptstyle \rightarrow}
\newcommand{\smallW}{\scriptstyle \leftarrow}
\newcommand{\ssmallW}{\scriptscriptstyle \leftarrow}
\newcommand{\smallN}{\scriptstyle \uparrow}
\newcommand{\smallS}{\scriptstyle \downarrow}
\newcommand{\smallnw}{\scriptscriptstyle \nwarrow}
\newcommand{\smallsw}{\scriptscriptstyle \swarrow}
\newcommand{\Nwsearrow}{\mathrlap{\nwarrow}{\searrow}}
\newcommand{\OTSP}{\begin{picture}(,)
\put(8.1,1.3){$\uparrow$}
\put(.3,-3.3){$\leftarrow$}
\put(0.2,1.3){$\nwarrow$}
\end{picture}\hspace{.4cm}
}
\newcommand{\smallOTSP}{\begin{picture}(,)
\put(6.2,0){$\smallN$}
\put(.3,-3.3){$\smallW$}
\put(1.7,0){$\smallnw$}
\end{picture}\hspace{.4cm}
}
\newcommand{\smallFSOSP}{\begin{picture}(,)
\put(5.5,2.7){$\smallN$}
\put(5.5,-2.3){$\smallS$}
\put(.9,3.1){$\smallnw$}
\put(.9,-2){$\smallsw$}
\put(0.5,.5){$\ssmallW$}
\end{picture}\hspace{.35cm}
}
\newcommand{\NE}{\begin{picture}(,)
\put(2,-5){$\rightarrow$}
\put(0,.5){$\uparrow$}
\end{picture}\hspace{.5cm}
}
\newcommand{\smallNE}{\begin{picture}(,)
\put(1.5,-3){$\smallE$}
\put(0,.5){$\smallN$}
\end{picture}\hspace{.5cm}
}
\newcommand{\SE}{\begin{picture}(,)
\put(1.5,4.8){$\rightarrow$}
\put(-.5,-.5){$\downarrow$}
\end{picture}\hspace{.5cm}
}
\newcommand{\SW}{\begin{picture}(,)
\put(0,4.8){$\leftarrow$}
\put(8.2,-0.5){$\downarrow$}
\end{picture}\hspace{.5cm}
}
\newcommand{\NW}{\begin{picture}(,)
\put(0,-5){$\leftarrow$}
\put(8,0){$\uparrow$}
\end{picture}\hspace{.5cm}
}
\newcommand{\NSW}{\begin{picture}(,)
\put(7.5,0){$\updownarrow$}
\put(0,0){$\leftarrow$}
\end{picture}\hspace{.5cm}
}
\title[ballistic]{Conditions for ballisticity and \\
invariance principle for 
random walk \\
in non-elliptic random environment.}
\author[Holmes]{Mark Holmes}
\address{Department of Statistics, University of Auckland}
\email{m.holmes@auckland.ac.nz}
\author[Salisbury]{Thomas S. Salisbury} 
\address{Department of Mathematics and Statistics, York University}
\email{salt@yorku.ca}
\keywords{Random walk, non-elliptic random environment, zero-one law, ballisticity, invariance principle}
\subjclass[2000]{60K37}
\begin{document}
\maketitle

\begin{abstract}
We study the asymptotic behaviour of random walks in i.i.d.~non-elliptic random environments on $\Z^d$.  Standard conditions (and proofs) for ballisticity and the central limit theorem require ellipticity. We use oriented percolation  and martingale arguments to give non-trivial local conditions for ballisticity and an annealed invariance principle in the non-elliptic setting.\end{abstract}

%\keywords{Random walk, non-elliptic random environment, zero-one law, ballisticity, invariance principle}
%\subjclass[2000]{60K37}

\maketitle

%\tableofcontents
\section{Introduction}
A central topic in modern probability and statistical physics is the study of {\em random walks in random media}.  Among the most important classes of such models is the so-called random walk in i.i.d.~random environment on $\Z^d$.  While these models have been studied for decades (see e.g.~\cite{Zeit04}), there are a number of fundamental problems that remain open in dimensions $d\ge 2$.  These problems include providing verifiable conditions under which the random walk is ballistic (and/or have Brownian motion as their scaling limit).  Existing results have largely been restricted to situations where the random environment is {\em elliptic}, i.e.~where steps to all nearest neighbours are possible (see e.g.~\cite{BRS16} and the references therein).  In other contexts (such as random walk on percolation clusters) where ellipticity may or may not be assumed, a crucial ingredient for establishing asymptotic behaviour of the walk is a property called {\em reversibility} (see e.g.~\cite{BBT2016} and the references therein).  Except in trivial cases, random walks in i.i.d.~random environments are {\em not} reversible.

We will study random walks in i.i.d.~random environments (RWRE) that are non-elliptic, such as in the following example.

\medskip

\begin{EXA}[$2$-dimensional orthant model]
\label{exa:NE_SW}
At each site $x\in \Z^2$ independently toss a (possibly biased) coin.  If the toss results in heads (probability $p$), insert one directed edge pointing up $\uparrow$ (to $x+(0,1)$) and one pointing right $\rightarrow$ (to  $x+(0,1)$). Otherwise (probability $1-p$) insert directed edges pointing down $\downarrow$ and left $\leftarrow$ (see Figure \ref{fig:orthant_env}).  Now start a random walk at the origin $o$ that evolves by choosing uniformly from available arrows at its current location.
\end{EXA}

\medskip

\begin{figure}
%{\color{red} BUT TOM DOESN'T HAVE THIS GRAPHICS FILE}
\begin{center}
\includegraphics[scale=.5]{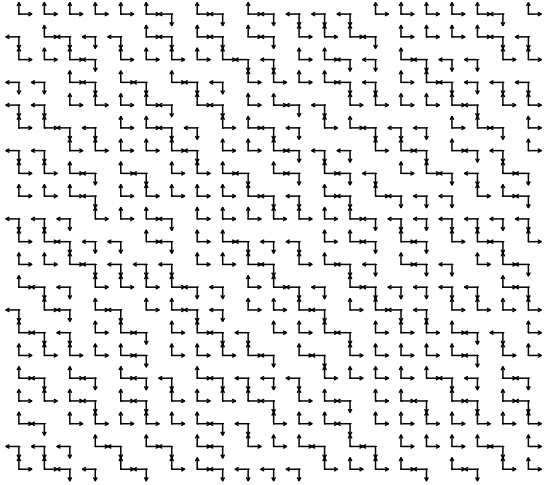} 
\end{center}
\vspace{-1cm}
\caption{A finite region of the random environment in Example~\ref{exa:NE_SW} for $p=.6$. 
}
\label{fig:orthant_env}
\end{figure}

Standard techniques used to establish ballistic behaviour in elliptic environments (see e.g.~\cite{Zeit04}) do not apply to this model (as it is not elliptic!).  
It is proved in \cite{RWDRE} that the random walk in Example \ref{exa:NE_SW} has an asymptotic velocity $v[p]$ that is monotone in $p$, and that $v[\frac12]=0$ by symmetry. It is also established that the walk is transient in direction $\nearrow$ when $p>p_c^{\smallOTSP}$ (where $p_c^{\smallOTSP}$ is the critical probability for oriented site percolation on the triangular lattice on $\Z^2$), using the fact that for such $p$, almost surely the origin is connected to only finitely many sites in direction $\swarrow$. These results do not establish ballisticity (i.e.~that $v[p]$ is non-zero) for any non-trivial value of $p$.

When applied to Example~\ref{exa:NE_SW}, our main results (Theorem \ref{thm:main}, and Propositions \ref{prp:E+} and \ref{prp:transverse}) imply that $v[p]\cdot(1,1)>0$ for $p>p_c^{\smallOTSP}$, where $p_c^{\smallOTSP}\approx 0.5956$ \cite{DeBE,JG}\footnote{The best rigorous bounds that we are aware of are that $.5730<p_c^{\smallOTSP}<0.7491$ \cite{HS_DRE2,BBS}},
and by symmetry that $v[p]\cdot(1,1)<0$ for $p<1-p_c^{\smallOTSP}$.  Moreover in this regime the random walk obeys an invariance principle with deterministic variance, for almost every environment. 
%and Theorem \ref{thm:strict}) 

\subsection{The model and main results}
\label{sec:model}

For fixed $d\ge 2$, let $\mc{E}=\{\pm e_i: i=1,\dots,d\}$ be the set of unit vectors in $\Z^d$, and let $\mc{E}_+=\{+e_i:i=1,\dots, d\}$ denote the standard basis vectors.  We use a graphical shorthand for subsets of $\mc{E}$, so that (for example) $\NE=\mc{E}_+$. Let $\mc{P}=M_1(\mc{E})$ denote the set of probability measures on $\mc{E}$, and let $\mu$ be a probability measure on $\mc{P}$.  If $\gamma\in \mc{P}$ we will abuse notation and write $\mu(\gamma)$ for $\mu(\{\gamma\})$.  Let $\Omega=\mc{P}^{\Z^d}$ be equipped with the product measure $\nu=\mu^{\otimes \Z^d}$ (and the corresponding product $\sigma$-algebra).  An environment $\omega=(\omega_x)_{x\in \Z^d}$ is an element of $\Omega$.  We write $\omega_x(e)$ for $\omega_x(\{e\})$. Note that $(\omega_x)_{x\in \Z^d}$ are i.i.d.~with law $\mu$ under $\nu$.  An environment is {\em $2$-valued} if $\mu$ is supported on exactly 2 points (such as in Example \ref{exa:NE_SW}). In this case we take the convention that $\mu$ is supported on $\{\gamma^{\sss(1)},\gamma^{\sss(2)}\}$ with $p=\mu(\gamma^{\sss(1)})$. 

The random walk in environment $\omega$ is a time-homogeneous (but not necessarily irreducible) Markov chain with transition probabilities from $x$ to $x+e$ defined by 
\begin{equation*}
%\label{eq:trans_prob}
p_{\omega}(x,x+e)=
\omega_x(e).
\end{equation*}
Given an environment $\omega$, we let $\mP_{\omega}$ denote the (quenched) law of this random walk $X_n$, starting at the origin.  Let $P$ denote the law of the annealed/averaged random walk, i.e.~$P(\cdot, \star):=\int_{\star}\mP_{\omega}(\cdot)d\nu$.

For $\gamma\in \mc{P}$, let $\mc{S}(\gamma)=\{e\in \mc{E}:\gamma(e)>0\}\subset \mc{E}$ denote the support of $\gamma$.  For $\mc{A}\subset \mc{E}$ we will write $\mu(\mc{A})$ to mean $\mu(\{\gamma\in \mc{E}:\mc{S}(\gamma)=\mc{A}\})$, i.e.~the $\mu$-measure of the set of probabilities on $\mc{E}$ whose support is $\mc{A}$.  For each environment $\omega$ we let $\mc{G}_x(\omega)=\mc{S}(\omega_x)$ and associate a directed graph $\mc{G}(\omega)$ with vertex set $\Z^d$ and edge set $e(\mc{G})$ given by
\begin{equation*}
(x,x+u)\in e(\mc{G}) \iff u\in\mc{G}_x.%\label{induced_graph}
\end{equation*}
%For each $x\in \Z^d$, the directed edge $(x,x+u)$ is in $\mc{G}_x$ if and only if $\omega_x(u)>0$, and the edge set of $\mc{G}(\omega)$ is $\cup_{x\in \Z^d} \mc{G}_x(\omega)$.  
%For convenience we will also write $\mc{G}=(\mc{G}_x)_{x\in \Z^d}$.    
Note that under $\nu$, the $(\mc{G}_x)_{x\in \Z^d}$ are i.i.d.~subsets of $\mc{E}$.  The directed graph $\mc{G}(\omega)$ is the entire graph $\Z^d$ (with directed edges), precisely when the environment is {\em elliptic}, i.e.~$\omega_x(u)>0$ for each $u \in \mc{E}, x\in \Z^d$ (i.e.~$\mu(\mc{E})=1$, using our other notation).  Much of the current literature on random walk in random media assumes either (uniform)  ellipticity or reversibility, neither of which hold for Example \ref{exa:NE_SW}.

On the other hand, given a directed graph $\mc{G}=(\mc{G}_x)_{x\in \Z^d}$ (with vertex set $\Z^d$, and such that $\mc{G}_x\ne \varnothing$ for each $x$), we can define a {\em uniform} random environment $\omega=(\omega_x(\mc{G}_x))_{x\in \Z^d}$ on $\mc{G}$. Let $|A|$ denote the cardinality of $A$, and set
\[\omega_x(e)=\begin{cases}
|\mc{G}_x|^{-1}, & \text{ if }e \in \mc{G}_x\\
0, & \text{otherwise}.
\end{cases}\]
The corresponding RWRE then moves by choosing uniformly from available steps at its current location.  
%This allows one to construct many nice examples of random walks in non-elliptic random environments:  first generate a random directed graph $\mc{G}=(\mc{G}_x)_{x\in \Z^d}$ where $\mc{G}_x$ are i.i.d., then run a random walk on the resulting random graph (choosing uniformly from available steps).  
This natural class of RWRE will henceforth be referred to as {\em uniform RWRE}.  
In particular, the 2-dimensional-orthant model (Example \ref{exa:NE_SW}) is the uniform RWRE on the random directed graph which has $\mathcal{G}_x=\NE$ with probability $p$, and $\mathcal{G}_x=\SW$ with probability $1-p$. 
%We label the latter random directed graph as $(\NE\SW)$. 

For any $n$, we call a sequence $(y_0, \dots, y_n)$ with each $y_i \in\Z^d$ a \emph{ $\mc{G}$-path} if $(y_i,y_{i+1})\in e(\mc{G})$ for each $i=0,1,\dots, n-1$. For any site $x\in \Z^d$, we let its \emph{forward cluster} $\mathcal{C}_x$ be the set of sites $y\in \Z^d$
such that there exists an $n$ and a $\mc{G}$-path $(y_0, \dots, y_n)$ such that $y_0=x$ and $y_n=y$.  
%or the stronger property of ($\nu$-almost sure) {\em uniform ellipticity}, i.e.~that $\exists \epsilon>0$ such that $\nu(\omega_x(u)>\epsilon)=1$ for each $u \in \mc{E}, x\in \Z^d$.    

We say that $V$ is an orthogonal set if $u\cdot v=0$ for every distinct pair $u,v\in V$.  Instead of ellipticity, we will assume the following properties:

\medskip

\begin{COND}
\label{cond:orthogonal}
There exists an orthogonal set $V\subset \mc{E}$ such that $\mu(\mc{S}\cap V\ne \varnothing)=1$.
\end{COND}

\medskip

\begin{COND}
\label{cond:ddim}
%\begin{equation*}
%\label{eq:fullrank}
There exists an orthogonal set $V'\subset \mc{E}$ with $|V'|=d$ such that $\mu(e\in\mc{S})>0$ for every $e\in V'$.
%\end{equation*}
\end{COND}

\medskip

Condition \ref{cond:orthogonal} requires that there is a set of orthogonal directions such that from any site the walker is able to follow at least one of these directions.  This assumption is precisely that required to ensure that the random walker never gets stuck in a finite set (see \cite[Theorem 1.2]{RWDRE}).  In the presence of Condition \ref{cond:orthogonal}, Condition \ref{cond:ddim} is equivalent to saying that the walk is truly $d$-dimensional.
%is equivalent (up to rotations/reflections) to the assumption that the walk is truly $d$-dimensional.
Note that Example \ref{exa:NE_SW} satisfies Condition \ref{cond:orthogonal} with $V=\{-e_1,e_2\}=\NW$ or equivalently with $V=\{e_1,-e_2\}=\SE$.  It clearly also satisfies Condition \ref{cond:ddim} (with $d=2$).

The RWRE literature contains a number of abstract conditions that imply ballisticity, which we discuss in Section \ref{sec:elliptic}. These can be difficult to verify directly in concrete examples. We turn first to our version of such an abstract condition. Following that we will turn to local conditions, that may be directly verified, and which imply the abstract one. 

Fix $d\ge 2$ and $\ell\in\R^d\setminus\{o\}$. For $\kappa>0$, we consider the cone
$$
\mc{K}_{\kappa,\ell}=\{u\in\R^d: u\cdot\ell\ge \kappa\| u\|\}.
$$
Let $R_n=|\{x: X_m=x \text{ for some }m\le n\}|$ denote the range of the walker up to time $n$.  Our first main result states that, if the forward cluster $\mc{C}_o$ is contained in a cone (whose apex is far from $o$ with only low probability), and the range of the walker is not too small, then the walk is ballistic and satisfies an annealed/averaged invariance principle. 

\medskip

\begin{THM}
\label{thm:main}
Let $d\ge 2$ and assume Conditions \ref{cond:orthogonal} and \ref{cond:ddim}. Let $\alpha, \beta, \kappa>0$ and take $\ell\in\R^d\setminus\{o\}$. Assume the following conditions:
\begin{itemize}
%\item $\mu(\mathcal{G}_o=\NE)>p_c^{\smallOTSP}$;
\item[(a)] There exist $C_1,\gamma_1>0$ such that \\
$\nu(\mc{C}_o\subset -n\ell + \mc{K}_{\kappa,\ell})\ge 1-C_1e^{-\gamma_1 n^\beta}$ for all $n \in \N$;
\item[(b)] For every $C>0$, there exist $C_2,\gamma_2>0$ such that \\
$P(R_n\le Cn^{\alpha})\le C_2e^{-\gamma_2 n^\beta}$, for all $n\in \N$.
\end{itemize}
Then there exist $v\in \R^d\setminus \{o\}$ and a non-negative definite 
matrix $\Sigma\in \R^{d\times d}$ such that $P(n^{-1}X_n \ra v)=1$ and under the annealed/averaged measure $P$,
\begin{align}
\Big(\frac{X_{\lfloor nt\rfloor}-vnt}{\sqrt{n}}\Big)_{t\ge 0}\Rightarrow(B_t)_{t\ge 0},\qquad \textrm{ as }n \ra \infty,\nn
\end{align}
where $B_t$ is a $d$-dimensional Brownian motion with covariance matrix $\Sigma$, and $\Rightarrow$ denotes weak convergence. Moreover $v\cdot\ell >0$.
\end{THM}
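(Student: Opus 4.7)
The plan is to construct a regeneration structure along direction $\ell$ and invoke classical renewal and invariance-principle machinery, adapting the Sznitman--Zerner framework to the non-elliptic setting by using condition~(a) in place of ellipticity. Specifically, I will define random times $0=\tau_0<\tau_1<\tau_2<\dots$ such that, under $P$, the blocks $(\tau_{k+1}-\tau_k,\, X_{\tau_{k+1}}-X_{\tau_k})$ are independent, i.i.d.\ for $k\ge 1$, and all sites visited before $\tau_k$ lie strictly below some level $L_k<X_{\tau_k}\cdot\ell$ while all sites visited at or after $\tau_k$ lie at or above $L_k$; this disjointness together with the product structure of $\nu$ is what produces the independence of the blocks. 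Granted $P(\tau_2-\tau_1>n)\le C e^{-cn^\beta}$ (so all polynomial moments of $\tau_2-\tau_1$ and $\|X_{\tau_2}-X_{\tau_1}\|$ are finite), the LLN with $v=E[X_{\tau_2}-X_{\tau_1}]/E[\tau_2-\tau_1]$ and the annealed invariance principle (with $\Sigma$ expressed via the covariance of the regeneration blocks) follow by standard arguments. Moreover $v\cdot\ell>0$, since each increment satisfies $(X_{\tau_{k+1}}-X_{\tau_k})\cdot\ell>0$.

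For the construction I use strict $\ell$-records: $R_k$ is the $k$-th time at which $X_{R_k}\cdot\ell$ strictly exceeds $\max_{m<R_k}X_m\cdot\ell$. Each $X_{R_k}$ is a fresh site, so the environment on the forward cluster $\mc{C}_{X_{R_k}}$ is independent of the walk's past. By translation invariance of $\nu$ and condition~(a),
\[
\nu\bigl(\mc{C}_{X_{R_k}}\subset X_{R_k}-h\ell+\mc{K}_{\kappa,\ell}\bigr)\ge 1-C_1 e^{-\gamma_1 h^\beta},
\]
so on this event every subsequent $X_m$ satisfies $X_m\cdot\ell\ge X_{R_k}\cdot\ell-h\|\ell\|^2$. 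I declare $\tau_k=R_k$ provided this cone event holds \emph{and} the jump $X_{R_k}\cdot\ell-\max_{m<R_k}X_m\cdot\ell>h\|\ell\|^2$ is large enough; the level $L_k:=X_{R_k}\cdot\ell-h\|\ell\|^2$ then strictly separates past from future, yielding the regeneration property.

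To show these opportunities arise quickly, invoke condition~(b) together with Conditions~\ref{cond:orthogonal} and~\ref{cond:ddim}: the range lower bound $R_n\ge Cn^\alpha$ translates (via the orthogonal-step structure of Condition~\ref{cond:orthogonal} and the truly $d$-dimensional structure of Condition~\ref{cond:ddim}) into at least $cn^\alpha$ strict $\ell$-records by time $n$, outside an event of probability $\le C_2 e^{-\gamma_2 n^\beta}$. At each such record the cone event, with $h=h(n)$ chosen of order $(\log n)^{1/\beta}$, holds with probability $1-n^{-A}$ for any fixed $A$, independently of the past by the freshness of $X_{R_k}$. A Borel--Cantelli estimate together with the Markov property at regeneration then yields $P(\tau_1>n)\le Ce^{-cn^\beta}$ and the same tail for $\tau_2-\tau_1$.

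The main obstacle is the coupling between~(a) and~(b): in the absence of ellipticity the walker cannot be ``steered'' to force regeneration, so the argument must rely purely on the combination of the cone tail estimate and the record lower bound. The delicate step is converting the range bound in~(b) into a lower bound on the number of \emph{strict $\ell$-records} (which relies on Conditions~\ref{cond:orthogonal}--\ref{cond:ddim} to prevent the walker from being trapped at a single $\ell$-level), and then propagating the stretched-exponential rate $e^{-cn^\beta}$ through the renewal construction without losing the exponent $\beta$.
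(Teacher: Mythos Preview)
Your overall plan (regeneration structure $\Rightarrow$ moment bounds $\Rightarrow$ LLN and FCLT) matches the paper's, but the execution has two genuine gaps.

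\textbf{The separation requirement is vacuous.} At any strict $\ell$-record $R_k$ the overshoot satisfies
\[
0 \;<\; X_{R_k}\cdot\ell - \max_{m<R_k} X_m\cdot\ell \;\le\; X_{R_k}\cdot\ell - X_{R_k-1}\cdot\ell \;\le\; \max_{e\in\mc{E}}|e\cdot\ell| \;\le\; \|\ell\|,
\]
since the walk is nearest-neighbour. Your condition that this overshoot exceed $h\|\ell\|^2$ with $h=h(n)\sim(\log n)^{1/\beta}$ therefore fails at every record once $n$ is large. Taking $h$ small and fixed does not help either: for small $h$ the cone event $\{\mc{C}_o\subset -h\ell+\mc{K}_{\kappa,\ell}\}$ is not covered by hypothesis~(a) (which is only for $n\in\N$) and can have probability zero. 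More structurally, letting $h$ depend on the horizon $n$ means your $\tau_k$ depend on $n$, so you never obtain a single renewal sequence to which the LLN and invariance principle apply.

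\textbf{Range does not yield records.} The claim that $R_n\ge Cn^\alpha$ forces at least $cn^\alpha$ strict $\ell$-records is unjustified and in general false: nothing in Conditions~\ref{cond:orthogonal}--\ref{cond:ddim} prevents the walk from visiting $\sim n^\alpha$ distinct sites while staying in a slab $\{x:a\le x\cdot\ell\le b\}$ of bounded $\ell$-width, in which case there are only $O(1)$ records. Converting large range into $\ell$-progress already requires the cone confinement of hypothesis~(a), and once you invoke that you are essentially doing what the paper does.

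The paper avoids both issues. It uses the \emph{standard} Sznitman--Zerner regeneration time $\mc{T}=T_K$ (first fresh level after which the walk never backtracks), whose existence follows once (a) yields directional transience; no overshoot condition is imposed. The tail $P_0(\mc{T}>n)$ is then bounded by splitting on the pre-regeneration maximum $M_{K-1}$. On $\{M_{K-1}\le n^\alpha\}$ intersected with the cone event $F_n$ at scale $n^{\alpha_3}$, the walk is confined to a region of diameter $O(n^\alpha)$, so $R_n\le C_3 n^\alpha$ and hypothesis~(b) applies directly. On $\{M_{K-1}>n^\alpha\}$, either $K$ is large (geometric tail) or some backtrack increment $\Delta_k$ exceeds $n^{\alpha_2}$; the latter means some site $x$ in a set of cardinality $O(n^{d\alpha})$ has $\mc{C}_x\not\subset x-n^{\alpha_2}\ell+\mc{K}_{\kappa,\ell}$, and a union bound with hypothesis~(a) finishes. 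Thus~(b) is used not to count records but to rule out long confinement in a bounded piece of the cone.
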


\medskip

For $\ell'\in \R^d\setminus\{o\}$ set $X'_n=X_n\cdot \ell'$ and call this the \emph{transverse walk}.  In many settings we will be able to conclude that the range of the walker satisfies condition (b) of the theorem by proving that it holds for the range of such a transverse walk.  
%Theorem \ref{thm:main} will typically be useful when $\ell'\perp\ell$.  
For Example \ref{exa:NE_SW}, if we take $\ell=(1,1)$ and $\ell'=(1,-1)$ then the transverse walk is a simple symmetric random walk on $\Z$, so (b) holds (for $\alpha<1/2$). We will show that for $p>p_c^{\smallOTSP}\approx 0.5956$ (a) also holds.  In fact for this model we get a {\em quenched} FCLT by applying \cite[Theorem 1.1]{RS09}.  Our results leave unresolved the question of whether Example \ref{exa:NE_SW} is ballistic when $\frac12<p\le p_c^{\smallOTSP}$, and even whether the speed in direction $(1,1)$ is strictly monotone for $p>p_c^{\smallOTSP}$.  We conjecture that it is strictly monotone in $p\in[0,1]$ (see Figure \ref{fig:vorthant}). When $p=\frac12$, we conjecture that infinitely many sites are visited infinitely often by the walk. 

\begin{figure}
%{\color{red}TOM DOESN'T HAVE THIS GRAPHICS FILE}
\includegraphics[scale=.5]{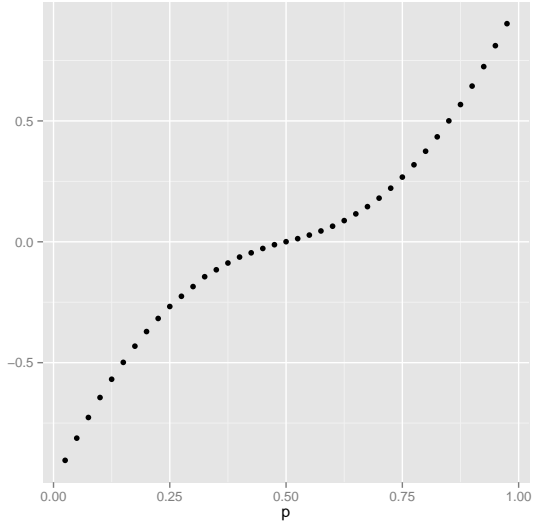}
\caption{Estimates of $v[p]\cdot (1,1)$ as a function of $p$ for the 2-dimensional orthant model (Example \ref{exa:NE_SW}) based on 1000 simulations of 1000 step walks for each $p$.}
\label{fig:vorthant}
\end{figure}
Note that Theorem \ref{thm:main}(a) is not sufficient to conclude ballisticity in general, as per the following example.

\medskip

\begin{EXA}
\label{exa:speed0}
Choose $\mu(\gamma(e_1)=1)=p$ and for each $i\in \N$, 
\[\mu(\gamma(-e_1)=1-2^{-i}, \gamma(e_2)=\gamma(-e_2)=2^{-(i+1)}\big)=\frac{c}{i^2},\]
where $\sum_{i\in \N}c2^{-i}=1-p$. Then the expected time that the walk spends oscillating between $(0,0)$ and $(1,0)$ before moving to another site is infinite for all $p<1$.  For all $p$  sufficiently large Theorem \ref{thm:main}(a) holds, and the walker is transient in direction $e_1$, but the speed is zero for all $p<1$.   
\end{EXA}

\medskip

\begin{PRP}
\label{prp:E+}
Assume Conditions \ref{cond:orthogonal} and \ref{cond:ddim}.  

\noindent For each $d\ge 2$ there is a $p_d\in (1/2,1)$ such that if $\mu(\mathcal{G}_o\subset\mc{E}_+)> p_d$ then the condition of Theorem \ref{thm:main}(a) is satisfied.  
If $d=2$ then this holds with $p_d= p_c^{\smallOTSP}$.
%\item[(b)] If Theorem \ref{thm:main}(a) holds in a direction $\ell$ with $e\cdot \ell>0$ for $e\in \mc{E}_+$, then there exists $p\in (1/2,1]$ such that $\mu(\mathcal{G}_o\subset \mc{E}_+)> p$.
%\end{itemize}
\end{PRP}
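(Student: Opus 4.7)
The plan is to set $\ell = \sum_{i=1}^d e_i$, so that $e \cdot \ell = +1$ for $e \in \mc{E}_+$ and $e \cdot \ell = -1$ for $e \in -\mc{E}_+$, and to fix $\kappa \in (0, 1)$ small enough that every $y \notin -n\ell + \mc{K}_{\kappa, \ell}$ satisfies $y \cdot \ell \le -c\,n$ for some explicit $c = c(\kappa) > 0$. Call a site $x$ \emph{good} if $\mc{G}_x \subset \mc{E}_+$, and \emph{bad} otherwise; good sites form an i.i.d.\ Bernoulli field with density $p := \mu(\mc{G}_o \subset \mc{E}_+)$. The decisive observation is that every forward $\mc{G}$-step out of a good site increases $x \cdot \ell$ by exactly $1$, whereas any step decreasing $x \cdot \ell$ must originate at a bad site. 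Consequently, any forward $\mc{G}$-path from $o$ to a point $y$ with $y \cdot \ell \le -m$ takes at least $m$ backward steps, each originating at a (visit to a) bad site.

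To identify $p_d$, I would use oriented site percolation of good sites. For $d = 2$, a forward $\mc{G}$-path between $x$ and $y$ using only $\mc{E}_+$-steps exists iff every intermediate site on some axis-monotone lattice path from $x$ to $y$ is good, which is exactly the open-connection relation for oriented site percolation on the triangular lattice on $\Z^2$; its critical value is $p_c^{\smallOTSP}$, so I set $p_d := p_c^{\smallOTSP}$. For $d \ge 3$, I take $p_d \in (1/2, 1)$ large enough that bad sites are deeply subcritical in Bernoulli site percolation on $\Z^d$ (such a $p_d$ exists since $p_c(\Z^d) > 0$). For $p > p_d$, standard supercritical percolation results (block renormalization as in Grimmett--Hiemer or Durrett) then give exponentially decaying diameters for finite clusters of bad sites.

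The heart of the proof is to dominate the failure event $\{\mc{C}_o \not\subset -n\ell + \mc{K}_{\kappa, \ell}\}$ by the existence of a chain of bad sites of backward $\ell$-extent $\ge c n$ linked by forward $\mc{G}$-paths. Any forward path from $o$ to $y$ with $y \cdot \ell \le -cn$ takes $\ge cn$ backward steps each at a bad site; the intermediate good-site portions contribute only $+\ell$-progress and so cannot undo the backward displacement, which forces the visited bad sites to form a chain of $\ell$-extent $\ge cn$ whose successive elements are joined by forward $\mc{G}$-paths. A block renormalization then converts this into the existence of a cluster of size $\ge c' n$ in a subcritical coarse-grained site percolation, yielding $\nu(\mc{C}_o \not\subset -n\ell + \mc{K}_{\kappa, \ell}) \le C_1 e^{-\gamma_1 n}$ (condition (a) of Theorem \ref{thm:main} with $\beta = 1$). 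The hard part will be this renormalization: $\mc{C}_o$ is a directed reachability structure, not a standard percolation cluster, and the bad sites it contains are linked by forward paths that can weave intricately between good and bad regions, so establishing deterministic domination by a subcritical block process requires a careful, model-dependent block construction --- particularly in $d = 2$, where the sharp threshold $p_d = p_c^{\smallOTSP}$ must be attained rather than merely approximated.
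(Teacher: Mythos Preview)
Your proposal contains a genuine geometric error and, separately, an approach for general $d$ that does not match the paper and is not viable as stated.

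\textbf{The cone reduction is false.} You claim one can choose $\kappa\in(0,1)$ so that every $y\notin -n\ell+\mc{K}_{\kappa,\ell}$ satisfies $y\cdot\ell\le -cn$. This is impossible for any $\kappa>0$: the complement of the shifted cone always contains points with $y\cdot\ell$ arbitrarily large and positive, provided $\|y\|$ is large enough in a transverse direction (e.g.\ $y=R(e_1-e_2)$ for $R\gg n/\kappa$). Escaping the cone is not only a matter of backward $\ell$-displacement; one can also escape sideways. The paper handles this correctly in Lemma~\ref{lem:SAW} by showing instead that any self-avoiding path from $o$ to a point outside the cone must have a \emph{fraction} $\theta$ of its steps in $\mc{E}_-$, and that its length $N$ is at least $n$.

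\textbf{The general-$d$ argument.} Your plan to invoke subcritical percolation of bad sites, followed by block renormalization, faces exactly the obstacle you identify at the end: the bad sites visited along a forward $\mc{G}$-path are linked by forward excursions through good sites and can be arbitrarily far apart, so they form nothing resembling a nearest-neighbour percolation cluster. You do not indicate how a block construction would overcome this, and I do not see one. The paper's argument avoids the difficulty entirely: fix a self-avoiding lattice path of length $N$ from $o$ to $y\in(-n\ell_++\mc{K}_{\kappa,\ell_+})^c$; since at least $N\theta$ of its steps lie in $\mc{E}_-$, at least $N\theta$ specific (distinct) sites along it must be bad, an event of probability $\le(1-p)^{N\theta}$. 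Summing $c_N(1-p)^{N\theta}$ over $N\ge n$ gives exponential decay once $(1-p)^\theta<1/\sigma_d$, i.e.\ $p>1-\sigma_d^{-2}=:p_d$. This is a direct path-counting bound, not a percolation or renormalization argument.

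\textbf{The $d=2$ argument.} You describe the role of oriented triangular site percolation incorrectly. The relevant open paths are not forward $\mc{G}$-paths in $\mc{E}_+$; they are paths of good sites with steps in $\{-e_1,e_2,e_2-e_1\}$, which act as \emph{barriers} that the forward cluster $\mc{C}_o$ cannot cross. The paper (Lemma~\ref{lem:basiclemma}, via Lemma~\ref{lem:abovealine} and the Durrett estimates of Lemma~\ref{lem:Durrett}) shows that for $p>p_c^{\smallOTSP}$ one can, except on an event of probability $\le Ce^{-\gamma n}$, produce a doubly-infinite open barrier lying above the two lines through $-n\ell_+$ with slopes $\theta$ and $1/\theta$ (for suitable $\rho_p<\theta<-1$), thereby trapping $\mc{C}_o$ in the cone. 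Your ``good-path'' picture would correspond to the square-lattice oriented model and would not give the sharp threshold $p_c^{\smallOTSP}$.
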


\medskip

The condition in Proposition \ref{prp:E+} is a local condition saying that with high probability the only transitions allowed by the local environment lie in a cone pointing in direction $\ell_+$.  This condition is similar in spirit to the ``forbidden direction'' condition of \cite{RS06} and \cite{RS07}. They assume a direction that is forbidden with probability 1 (together with a so-called non-nestling assumption that makes ballisticity immediate) and then prove an invariance principle.  One might describe our assumptions on the environment as having a direction that is rare;y allowed rather than forbidden.  

The hypothesis of Proposition \ref{prp:E+} is not equivalent to Theorem \ref{thm:main}(a).  For example, the uniform $(\rightarrow \, \updownarrow)$ example (i.e.~$\mu(\gamma(e_1)=1)=p$ and $\mu(\gamma(e_2)=1/2=\gamma(-e_2))=1-p$) satisfies Theorem \ref{thm:main}(a) for every $p>0$.  Note that in this example $-e_1$ is a {\em forbidden direction}.  Similarly, if $\mu(\gamma(e_1)=1)=p_1$ and $\nu(\gamma(-e_1)=\hlf=\gamma(e_2))=p_2$ and $\mu(\gamma(e_2)=1/2=\gamma(-e_2))=1-p_1-p_2$ then Theorem \ref{thm:main}(a) will hold as long as $p_2$ is very small relative to $p_1$, even if $p_1$ itself is small.

Let $\mc{F}'_k=\sigma(X'_0,\dots, X'_k)$.  The following give verifiable conditions under which the condition of Theorem \ref{thm:main}(b) holds.

\medskip

\begin{PRP}
\label{prp:transverse} 
Assume Conditions \ref{cond:orthogonal} and \ref{cond:ddim}.
\begin{itemize}
\item[(a)] If for some $\ell'\ne o$ the transverse walk $X'_k$ is a submartingale (with bounded step size) under $P$ such that for some $\eta, \eta'>0$,
\[P(|X'_{k+1}- X'_{k}|>\eta|\mc{F}'_k)>\eta',\]
then the condition of Theorem \ref{thm:main}(b) is satisfied. 
\item[(b)] If $d\ge 2$ and $\mu$ is 2-valued then the condition of Theorem \ref{thm:main}(b) is satisfied.
\end{itemize}
\end{PRP}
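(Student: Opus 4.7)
For part (a), I aim to show $R_n \geq Cn^\alpha$ except on an event of probability at most $C_2 e^{-\gamma_2 n^\beta}$ for some choice of $\alpha,\beta>0$. The key reduction is $R_n \geq |\{X'_0,\dots,X'_n\}| \geq (\max_{k\leq n} X'_k - \min_{k\leq n} X'_k)/|\ell'|$: the transverse walk has bounded increments, so each time it reaches a new extreme it must visit a previously unseen value, corresponding to a new position of $X$. It therefore suffices to lower-bound the range of $X'_k$. I then apply the Doob decomposition $X'_k = M_k + A_k$, where $M$ is a mean-zero martingale with $|\Delta M_k|\leq 2|\ell'|$ and $A$ is the nonnegative, non-decreasing predictable compensator. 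The big-jump hypothesis gives $\E[(\Delta X'_k)^2\mid\mc{F}'_{k-1}]\geq \eta^2\eta'$ a.s., and since this quantity equals $\E[(\Delta M_k)^2\mid\mc{F}'_{k-1}]+(\Delta A_k)^2$, at each step either $\Delta A_k$ is bounded below by a positive constant or the conditional second moment of $\Delta M_k$ is. Summing over $k$ gives a deterministic dichotomy: either $A_n\geq cn$ (drift regime) or $\langle M\rangle_n\geq cn$ (variance regime).

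In the drift regime, Azuma--Hoeffding forces $|M_n|\leq cn/4$ with probability $\geq 1-e^{-c'n}$, so that $X'_n\geq cn/4$ and the range is linear in $n$. In the variance regime, I invoke an anticoncentration lemma for bounded-increment martingales with linear quadratic variation: if $\langle M\rangle_n\geq cn$ and $|\Delta M_k|\leq C$, then
\[
P\bigl(\max_{k\leq n}|M_k|\leq L\bigr)\leq C_1 \exp(-c_2 n/L^2).
\]
This is proved by a test-function argument: set $f(x)=\cos(\pi x/(2L))$ on $[-L,L]$ and extend by zero; the concavity of $f$ together with the lower bound on conditional variance yields $\E[f(M_{k+1})\mid\mc{F}_k]\leq (1-c/L^2)f(M_k)$, so $\E[f(M_n)]$ decays exponentially in $n/L^2$. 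Using the bound $\max_k X'_k-\min_k X'_k\geq \max_k|M_k|-A_n$ (a consequence of $X'_k=M_k+A_k$ with $A$ monotone) and choosing the dichotomy threshold together with $L=n^\alpha$ for a suitably small $\alpha>0$, both cases deliver $P(R_n\leq C n^\alpha)\leq C_2 e^{-\gamma_2 n^\beta}$ for some $\beta>0$.

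For part (b), I seek $\ell'\neq o$ such that the transverse walk $X_k\cdot\ell'$ satisfies the hypotheses of part (a), then apply (a). Let $v_j=\sum_e \gamma^{(j)}(e)\,e$ denote the drift vector at each of the two environment types. Under $P$, the conditional expectation $\E[X_{k+1}-X_k\mid\mc{F}_k]$ is one of $v_1$, $v_2$, or the annealed mean $p v_1+(1-p)v_2$, depending on whether the current site is fresh or has been previously visited and of which type. Hence any $\ell'$ in the dual cone $\{u:u\cdot v_1\geq 0,\,u\cdot v_2\geq 0\}$ makes $X_k\cdot\ell'$ a submartingale with respect to $\mc{F}_k$, and hence with respect to $\mc{F}'_k$ by the tower property. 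This dual cone always contains a nonzero vector: if $v_1,v_2$ are not strictly anti-parallel (and not both zero), it has nonempty interior, and generic choices of $\ell'$ inside are not orthogonal to $\mc{S}(\gamma^{(j)})$ for either $j$; in the degenerate case $v_1 = -c v_2$, the dual cone reduces to $v_2^\perp$, which is nontrivial because $d\geq 2$, and Condition~\ref{cond:ddim} combined with the 2-valued structure forces $\mc{S}(\gamma^{(j)})\not\subset\mathrm{span}(v_2)$ for each $j$, so a suitable $\ell'\in v_2^\perp$ can still be found. In either case the resulting $\ell'$ yields $\gamma^{(j)}(\{e:|e\cdot\ell'|>\eta\})>0$ for some $\eta>0$ and both $j$, whence the big-jump condition holds with $\eta'=\min_j \gamma^{(j)}(\{|e\cdot\ell'|>\eta\})$.

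\textbf{Main obstacle.} The martingale anticoncentration lemma in part (a) is the technical core; the cosine test-function calculation needs care at the boundary of $[-L,L]$ where increments may push $M$ out of the interval, and combining the drift and variance regimes to deliver a single exponential bound on $R_n$ without degrading $\beta$ requires a calibrated choice of threshold. In part (b) the one delicate case is the degenerate configuration $v_1=-cv_2$, which is ultimately handled by the interplay of Conditions~\ref{cond:orthogonal}, \ref{cond:ddim} and the hypothesis $d\geq 2$.
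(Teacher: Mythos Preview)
Your approach via Doob decomposition is correct in spirit but more laborious than necessary, and some of the steps as stated are not quite right. The paper avoids the decomposition entirely: it applies the cosine test function directly to the submartingale $X'_k$, using $g(u)=\cos(\tfrac{\pi}{4}+u)$ rather than the symmetric $\cos(\pi u/2L)$. The point is that $g'<0$ on the whole interval $[-a,a]$, so in the Taylor expansion the first-order term $g'(X'_k/2y)\,E[\Delta\mid\mc{F}'_k]$ is automatically $\le 0$ by the submartingale property, and the second-order term uses the hypothesis $E[\Delta^2\mid\mc{F}'_k]\ge\eta^2\eta'$ directly. This gives $P(\max_{k\le n}|X'_k|\le y)\le Ce^{-\gamma n/y^2}$ in one stroke, with no case split. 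By contrast, your anticoncentration lemma for $M$ is stated under the \emph{global} hypothesis $\langle M\rangle_n\ge cn$, whereas the cosine supermartingale argument naturally wants a \emph{pointwise} lower bound on $E[(\Delta M_k)^2\mid\mc{F}_{k-1}]$; patching this requires tracking a predictable exponent. And the claimed inequality $\max_k X'_k-\min_k X'_k\ge \max_k|M_k|-A_n$ is not useful in your ``variance regime'', since nothing there prevents $A_n$ from being large. These are fixable, but the paper's asymmetric test function sidesteps all of it.

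\textbf{Part (b).} Here there is a genuine gap. Your strategy is to reduce every $2$-valued model to part (a) by choosing $\ell'$ in the dual cone $\{u:u\cdot v_1\ge 0,\ u\cdot v_2\ge 0\}$. In the anti-parallel case $v_1=-cv_2$ this cone is $v_2^\perp$, and you assert that Condition~\ref{cond:ddim} forces $\mc{S}(\gamma^{(j)})\not\subset\mathrm{span}(v_2)$ for each $j$. That assertion is false. Take Example~\ref{exa:E_NSW}: $\gamma^{(1)}$ is supported on $\{e_1\}$ and $\gamma^{(2)}$ on $\{-e_1,e_2,-e_2\}$, so $v_1=e_1$, $v_2=-\tfrac13 e_1$, and $\mc{S}(\gamma^{(1)})=\{e_1\}\subset\mathrm{span}(v_2)$. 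Conditions~\ref{cond:orthogonal} and~\ref{cond:ddim} both hold (with $V=V'=\{e_1,e_2\}$), yet every $\ell'\in v_2^\perp$ is a multiple of $e_2$, and the transverse walk $X_k\cdot e_2$ does not move at $\gamma^{(1)}$ sites --- the big-jump hypothesis of part (a) fails. The paper states this explicitly after Example~\ref{exa:E_NSW}.

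The paper's proof of (b) therefore does not reduce entirely to (a). It first disposes of the cases where a suitable $\ell'$ exists (via a case analysis on the drift vectors $\vec u^{(1)},\vec u^{(2)}$, using Condition~\ref{cond:orthogonal} to guarantee movement), and then isolates the residual case where one environment, say $\gamma^{(1)}$, is supported on $\{\pm e_1\}$ while $\vec u^{(1)}=u_1e_1\ne 0$. For that case it runs a different argument: let $N_n$ count the visits to $\gamma^{(1)}$ sites, and split according to whether $N_n/n\le 1-\delta$. On $\{N_n/n\le 1-\delta\}$ the transverse martingale $X_k\cdot(0,1,\dots,1)$, time-changed to advance only at $\gamma^{(2)}$ sites, satisfies the hypotheses of the cosine lemma and yields the range bound. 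On $\{N_n/n>1-\delta\}$ the walk has taken at least $(1-\delta)n$ i.i.d.\ $\pm e_1$ steps with mean $u_1>0$, and a Cram\'er large-deviation bound forces $X_n\cdot e_1$ (hence $R_n$) to be linear in $n$ with stretched-exponential error. Your plan is missing this entire branch.
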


\medskip

Proposition \ref{prp:transverse}(a) requires a projection of the walk to be a submartingale that can always move with probability bounded away from zero.  In the terminology of \cite{Zern98} this implies that the walk is either {\it non-nestling} or {\it marginal-nestling} in the transverse direction. 

The condition of Proposition \ref{prp:transverse}(a) does not hold for the following 2-valued example.

\medskip

\begin{EXA}
\label{exa:E_NSW}
For the uniform RWRE $(\rightarrow \NSW)$, the only projection that gives a submartingale is the projection in the direction $\pm e_2$.  However this martingale does not move at $\rightarrow$ sites, so it does not satisfy  Proposition \ref{prp:transverse}(a).
\end{EXA}

\medskip

% For example, suppose the first step is \ell_1 and the second is -\ell_1. If \ell_1\neq\pm\ell_2 then we know the location and environment, so to be a submartingale at the next step requires \ell_1\le 0. Likewise if the first step is -\ell_1 and the second is \ell_1, to be a submartingale requires \ell_1\ge 0. To get the same conclusion if \ell_1=\pm\ell_2 we consider the case that the first 2n steps are \ell_1 and -\ell_1 in succession, when n is large. 
Nevertheless, according to Proposition \ref{prp:transverse}(b), Theorem \ref{thm:main}(b) holds for Example \ref{exa:E_NSW}.  Therefore the walk of Example \ref{exa:E_NSW} will be ballistic in direction $\rightarrow$ as soon as $p>p_c^{\smallOTSP}$. We believe that in this example, our arguments prove ballisticity for a wider range of $p$, namely $p>p_c^{\smallFSOSP}$, where the latter is defined in \cite{HS_DRE1} (see the discussion preceding Theorem 3.13 of that paper). But we have not verified all the details.

As in Example \ref{exa:E_NSW}, the following is an immediate corollary of the above propositions and Theorem \ref{thm:main}.

\medskip

\begin{COR}
\label{cor:2valued} Let $d\ge 2$, and assume Conditions \ref{cond:orthogonal} and \ref{cond:ddim}.  

\noindent If $\mu$ is 2-valued and $\mu(\mc{G}_o\subset \mc{E}_+)>p_d$ then the model is ballistic in the direction $\ell_+$.
\end{COR}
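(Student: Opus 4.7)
The plan is to simply combine the three main results of the paper, since each hypothesis of the corollary is tailor-made to feed into one of them. First, the hypothesis that $\mu$ is $2$-valued, together with $d\ge 2$ and Conditions \ref{cond:orthogonal}--\ref{cond:ddim}, is exactly what Proposition \ref{prp:transverse}(b) requires, so the range estimate in Theorem \ref{thm:main}(b) holds. Second, the hypothesis $\mu(\mathcal{G}_o\subset\mathcal{E}_+)>p_d$ is precisely the assumption of Proposition \ref{prp:E+}, so the cone-confinement estimate in Theorem \ref{thm:main}(a) also holds, and moreover it holds with the specific direction $\ell=\ell_+$ (the unit vector parallel to $e_1+\dots+e_d$) and some cone aperture $\kappa>0$ produced by that proposition.

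With both (a) and (b) in hand for this choice of $\ell$, Theorem \ref{thm:main} applies and produces an asymptotic velocity $v\in\R^d\setminus\{o\}$ with $P(n^{-1}X_n\to v)=1$ and $v\cdot\ell_+>0$. The latter inequality is exactly the assertion that the walk is ballistic in direction $\ell_+$, so the corollary follows. The only mild bookkeeping item is to make sure that the $\ell$ from Proposition \ref{prp:E+} matches the $\ell$ appearing in Theorem \ref{thm:main}(a); this is automatic because Proposition \ref{prp:E+} furnishes the cone estimate in the direction $\ell_+$ dictated by $\mathcal{E}_+$, which is the same direction in which we then extract ballisticity from Theorem \ref{thm:main}.

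Since the argument is a direct concatenation of previously stated results, there is no genuine obstacle to address: all the analytic work is hidden inside Proposition \ref{prp:E+} (where oriented percolation is used to control $\mathcal{C}_o$) and Proposition \ref{prp:transverse}(b) (where the $2$-valued structure is exploited to control the range). The proof is therefore simply a one-line invocation of these three results in sequence.
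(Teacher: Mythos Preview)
Your argument is correct and matches the paper's approach exactly: the paper states this result as ``an immediate corollary of the above propositions and Theorem~\ref{thm:main}'' and gives no further proof. Your decomposition---Proposition~\ref{prp:transverse}(b) for hypothesis~(b), Proposition~\ref{prp:E+} for hypothesis~(a) with $\ell=\ell_+$, then Theorem~\ref{thm:main} for the conclusion $v\cdot\ell_+>0$---is precisely what is intended.
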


\medskip

We suspect that one can replace $\mu$ being 2-valued with $\mu$ having {\em finite} support in Corollary \ref{cor:2valued}.  Note that Example \ref{exa:speed0} does not have finite support.

\medskip

The remainder of this paper is organised as follows.  In Section \ref{sec:ball} we recall some facts about directional transience, regeneration and ballisticity.  In Section \ref{sec:proof_main} we prove Theorem \ref{thm:main}.  In Section \ref{sec:propositionproof} we prove Proposition \ref{prp:transverse}.  In Section \ref{sec:percolation} we prove Proposition \ref{prp:E+} 
by examining percolation-type properties (the structure of forward clusters for certain degenerate random environments).  

Finally in Section \ref{sec:elliptic} we discuss other ballisticity conditions, and compare our results with those of the elliptic theory. In particular, we will find that in Example \ref{exa:NE_SW}, having strong barriers $\SW$ is an insurmountable obstacle to obtaining a positive speed in direction $\ell=(1,1)$ using one of the standard ballisticity conditions. One way of interpreting our results in the context of Example \ref{exa:NE_SW} is that we can overcome the presence of strong barriers $\SW$ by strengthening the forward push and including sufficiently many sites $\NE$ that don't permit backwards motion.

\section{Regeneration and Ballisticity}
\label{sec:ball}
In  non-elliptic environments (such as that of Example \ref{exa:NE_SW}) some sites may be unreachable by the walk.  Moreover, if Condition \ref{cond:orthogonal} does not hold then the range $\mc{R}$ of the random walk is finite.

Fix $\ell\in \R^d\setminus \{o\}$.  Let $A_+^\ell$ and $A_-^\ell$ denote the events that $X_n\cdot \ell\to\infty$ and $X_n\cdot \ell\to-\infty$ respectively.  The following is proved in \cite[Theorems 1.2--1.5]{RWDRE}, in most cases by adapting the methods of  Kalikow \cite{K81},  Sznitman and Zerner \cite{SZ99}, Zerner \cite{Zern02,Zern07} and Zerner and Merkl \cite{MZ01} to the non-elliptic setting.

\medskip

\begin{THM}[{\cite[ Theorems 1.2--1.5]{RWDRE}}]
\label{thm:old}
For i.i.d.~RWRE the following hold (for every $\ell\in \R^d\setminus \{o\}$):
\begin{itemize}
\item[(a)] $P(|\mc{R}|=\infty)\in \{0,1\}$, with $P(|\mc{R}|=\infty)=1$ if and only if Condition \ref{cond:orthogonal} holds.
\item[(b)]   $P(A_+^\ell\cup A_-^\ell)\in \{0,1\}$.
\item[(c)]  There exist deterministic $v_{+}(\ell), v_{-}(\ell)$ such that 
\[\lim_{n\ra \infty}\frac{X_n\cdot \ell}{n}=v_{+}(\ell)\mathbbm{1}_{A_+^\ell}+v_{-}(\ell)\mathbbm{1}_{A_{-}^\ell}, \quad P-\text{a.s.}\]
\item[(d)]  When $d=2$, $P(A_+^{\ell})\in \{0,1\}$ (hence a deterministic velocity vector $v$ always exists in 2-dimensions).
% and if $P(A_+^\ell)=1$ then $v_+(\ell)=v[p]\cdot\ell$).
\item[(e)]  Assume that $\mu$ is 2-valued, and supported on $\{\gamma^{\sss(1)},\gamma^{\sss(2)}\}$, with $p=\mu(\gamma^{\sss(1)})$. Assume that the velocity $v=v[p]$ exists for each $p$. Then each coordinate of $v[p]$ is monotone in $p$.
\end{itemize}
\end{THM}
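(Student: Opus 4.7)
The proof proceeds item by item, adapting the classical $0$-$1$ law and regeneration-time machinery (Kalikow, Sznitman--Zerner, Zerner, Zerner--Merkl) from the elliptic setting, using Condition \ref{cond:orthogonal} as the substitute for ellipticity.

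For (a), the event $\{|\mc{R}|=\infty\}$ is invariant under $\Z^d$-shifts of the environment, so ergodicity of $\nu$ together with the Markov property of the walk gives $P(|\mc{R}|=\infty)\in\{0,1\}$. Under Condition \ref{cond:orthogonal} with orthogonal $V$, at every site the walker has positive probability of stepping along some $v\in V$; orthogonality prevents cancellation among the different $V$-coordinates of the walker's position, so the walker cannot be confined to any finite box. If Condition \ref{cond:orthogonal} fails, a combinatorial/compactness argument produces a finite $B\subset\Z^d$ and an $\omega|_B$-configuration of positive $\nu$-probability from which the walker cannot escape; the i.i.d.\ structure then forces the walker into some such trap a.s. For (b), $A_+^\ell\cup A_-^\ell$ is invariant under joint shifts of $\omega$ and $X$, and a Kalikow-type $0$-$1$ law gives the dichotomy.

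For (c), on $A_+^\ell$ one introduces regeneration times $\tau_k$—times at which $X_n\cdot\ell$ attains a new maximum that is never subsequently crossed—shows they are a.s.\ finite using Condition \ref{cond:orthogonal} to guarantee escape from slabs, and applies a LLN to the stationary increments $X_{\tau_{k+1}}-X_{\tau_k}$ to obtain the deterministic speed $v_+(\ell)$; the same construction in $-\ell$ yields $v_-(\ell)$ on $A_-^\ell$. For (d), in $d=2$ planarity is decisive: if both $P(A_+^\ell)$ and $P(A_-^\ell)$ were positive, translating the environment would produce pairs of walks whose trajectories must cross in ways incompatible with the directed-edge structure of $\mc{G}(\omega)$, a contradiction of the Zerner--Merkl type.

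Finally for (e), I would use a monotone coupling of $\nu_p$ and $\nu_{p'}$ for $p<p'$: draw a single uniform $U_x\in[0,1]$ per site and set $\omega_x^{(p)}=\gamma^{\sss(1)}$ iff $U_x\le p$, so that $\omega^{(p)}$ and $\omega^{(p')}$ agree except at sites with $U_x\in(p,p']$, where $\omega^{(p)}$ has $\gamma^{\sss(2)}$ and $\omega^{(p')}$ has $\gamma^{\sss(1)}$. Driving the two walks by common noise, a sample-path argument tracks how visits to these ``switched'' sites affect each coordinate; since replacing $\gamma^{\sss(2)}$ by $\gamma^{\sss(1)}$ changes the local drift in a coordinate-consistent way, each coordinate of the displacement at time $n$ is monotone in $p$, and this passes to $v[p]$ in the limit.

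The main obstacle is (c): without ellipticity the usual bounds on regeneration-time tails—which use that every site supports every unit step with positive probability—fail, and one must extract analogous estimates from Condition \ref{cond:orthogonal} together with the directional transience on $A_+^\ell$. This is delicate because non-elliptic environments can contain long corridors with no forward option, where the walker's $\ell$-coordinate may be stalled. A secondary difficulty is the planarity argument in (d), which must accommodate missing edges and one-way corridors in $\mc{G}(\omega)$ before the classical crossing argument can be invoked.
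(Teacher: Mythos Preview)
This theorem is not proved in the present paper; it is quoted from \cite{RWDRE}, with the paper remarking only that it is obtained there ``in most cases by adapting the methods of Kalikow, Sznitman and Zerner, Zerner, and Zerner and Merkl to the non-elliptic setting.'' Your overall plan---adapting precisely those classical arguments, with Condition~\ref{cond:orthogonal} replacing ellipticity---is therefore exactly the route the paper attributes to \cite{RWDRE}, and your sketches for (a)--(d) are in the right spirit.

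There are, however, two points where your proposal goes astray. First, your ``main obstacle'' is misdiagnosed. Part~(c) asserts only that a deterministic (possibly zero) directional speed exists on $A_+^\ell$; for this, no tail bounds on the regeneration times are needed. Once the $(\mc{T}_{k+1}-\mc{T}_k, X_{\mc{T}_{k+1}}-X_{\mc{T}_k})$ are i.i.d., the law of large numbers yields $X_n\cdot\ell/n\to v_+(\ell)$ with $v_+(\ell)=E[(X_{\mc{T}_2}-X_{\mc{T}_1})\cdot\ell]/E[\mc{T}_2-\mc{T}_1]$, interpreted as $0$ when the denominator is infinite. Obtaining stretched-exponential tails for $\mc{T}_1$ is precisely the content of Theorem~\ref{thm:main} of the present paper, and is what yields \emph{ballisticity}; it is not required for Theorem~\ref{thm:old}(c). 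So the delicate step you flag belongs to a different theorem.

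Second, your argument for (e) has a genuine gap. Under the site-wise coupling you describe, the two walks agree until the first visit to a site with $U_x\in(p,p']$, but after that they are at different locations reading different parts of the environment, and there is no reason for pathwise coordinate-wise domination of $X_n$ to persist. The statement ``each coordinate of the displacement at time $n$ is monotone in $p$'' is not true sample-path-wise, and cannot be salvaged by ``driving the two walks by common noise'' once their positions differ. The monotonicity of $v[p]$ in \cite{RWDRE} is obtained by a different mechanism (working with the speed formula rather than a pathwise comparison); your sketch would need to be replaced by such an argument.
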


\medskip

Note that since $P(A)=E_{\nu}[\mP_{\omega}(A)]$ and $0\le \mP_{\omega}(A)\le 1$, $P(A)=1$ if and only if $\mP_{\omega}(A)=1$ for $\nu$-almost every $\omega$.  Similarly $P(A)=0$ if and only if $\mP_{\omega}(A)=0$ for $\nu$-almost every $\omega$.  

Theorem \ref{thm:old}(c) relies on a regeneration structure that is present on the event of directional transience.  This is well known in the uniformly elliptic setting, but perhaps less so in the non-elliptic setting.   For the purposes of this paper, fix $\ell\in \R^d\setminus o$ and assume almost sure transience in direction $\ell$, i.e.~that
\begin{equation}
\label{eqn:transience}
P(A^\ell_+)=1.
\end{equation}

For the remainder of this section, we assume Conditions \ref{cond:orthogonal} and \ref{cond:ddim}. The regeneration structure is then as follows (see the proof of Theorem 1.4 of \cite{RWDRE}, and note that the following makes a slight correction to how the structure was stated there).

Let $T_0=M_0=0$ and $D_0=\inf\{n>0:X_n\cdot \ell<0\}$. Let $T_1=\inf\{n:X_n\cdot \ell\ge 1\}$.  For $k\ge 1$ and $T_k<\infty$ let $D_k=\inf\{n>T_k:X_n\cdot \ell<X_{T_k}\cdot \ell\}$.  If $D_k<\infty$ then we let $M_k=\sup\{X_n\cdot \ell:n\le D_k\}$ and $T_{k+1}=\inf\{n>D_k:X_n\cdot \ell\ge M_k+1\}$.  Set $\Delta_{k+1}=M_{k+1}-M_k$.
Let $K=\inf\{k\ge 1:D_k<\infty\}$.   Then \eqref{eqn:transience} implies that $K<\infty$ a.s., and indeed, $P(D_k=\infty|T_k<\infty)$ is some fixed value $q>0$, so $K\ge 1$ is geometrically distributed;
\begin{equation*}
P(K> k)=(1-q)^k, \quad \text{ for $k\ge 1$.}
%\label{eqn:geometricK}
\end{equation*}
Thus we may define $\mathcal{T}_1=T_K$. This $\mathcal{T}_1$ acts as a regeneration time, as the process $\hat X_n=X_{\mathcal{T}_1+n}-X_{\mathcal{T}_1}$ and the environment $\hat \omega_x=\omega_{x+X_{\mathcal{T}_1}}$ (for $x\cdot \ell\ge 0$) are independent of the environment and walk observed up to time $\mathcal{T}_1$. This allows one to construct additional regeneration times $\mathcal{T}_{1}<\mathcal{T}_2<\dots$ such that the $X_{(\mathcal{T}_k+n)\land \mathcal{T}_{k+1}}-X_{\mathcal{T}_k}$ are i.i.d.~(over $k$) segments of path.  
%Let $\mc{D}=\{X_{\mathcal{T}_k}\cdot \ell:k \in \N\}$ denote the (random) set of regeneration levels, i.e.~levels reached in direction $\ell$ such that the walker never backtracks past that level after first reaching it.  Under assumption \eqref{eqn:transience}, $|\mc{D}|=\infty$ almost surely, and moreover $P(0 \in \mc{D})>0$.  Let $\mc{T}=\mc{T}_2-\mc{T}_1$.  
Then the above discussion says that $\{(X_{\mc{T}_{k+1}}-X_{\mc{T}_{k}},\mc{T}_{k+1}-\mc{T}_k)\}_{k \in \N}$ are i.i.d.~copies of $(X_{\mc{T}_2}-X_{\mc{T}_1},\mc{T}_2-\mc{T}_1)$.  As a consequence (see e.g.~the proof of Theorem 1.4 of \cite{RWDRE}), 
$$
\label{speedformula}
v\cdot \ell=\frac{E[(X_{\mc{T}_2}-X_{\mc{T}_1})\cdot \ell]}{E[\mc{T}_2-\mc{T}_1]}=\frac{E[X_{\mc{T}_1}\cdot \ell\mid D_0=\infty]}{E[\mc{T}_1\mid D_0=\infty]}.
$$

Since for a unit vector $\ell$ we have that $\mc{T}_1\ge 1$ and $0<X_{\mc{T}_1}\cdot \ell<\mc{T}_1$ (by definition of $\mc{T}_i$) we immediately have the following well-known ballisticity criterion. 

\medskip

\begin{LEM}
\label{lem:ballisticitycriterion}
Assume \eqref{eqn:transience} as well as Conditions \ref{cond:orthogonal} and \ref{cond:ddim}. If $E[\mc{T}_1\mid D_0=\infty]<\infty$ then $v\cdot\ell >0$.  
\end{LEM}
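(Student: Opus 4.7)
The plan is to derive the conclusion directly from the ratio representation of $v\cdot\ell$ recorded immediately before the lemma, namely
\[
v\cdot \ell = \frac{E[X_{\mc{T}_1}\cdot \ell \mid D_0=\infty]}{E[\mc{T}_1 \mid D_0=\infty]}.
\]
This identity, and the i.i.d.\ structure of the regeneration increments on which it rests, is already in hand from the discussion above (attributed to \cite{RWDRE}), so no new probabilistic machinery is needed; the proof is essentially a one-line exercise in reading off both sides.

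Without loss of generality take $\|\ell\|=1$. The denominator is strictly positive and finite: finiteness is the hypothesis, and $\mc{T}_1\ge 1$ by construction, so the denominator lies in $[1,\infty)$. For the numerator, I would use the definition of the regeneration ladder to show that $X_{\mc{T}_1}\cdot \ell\ge 1$ pointwise on $\{D_0=\infty\}$. By definition $\mc{T}_1=T_K$ for some $K\ge 1$, and a short induction on $k$ using $T_{k+1}=\inf\{n>D_k: X_n\cdot \ell \ge M_k+1\}$ together with $M_k\ge X_{T_k}\cdot \ell$ gives $X_{T_k}\cdot \ell\ge k$ whenever $T_k<\infty$. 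In particular $X_{\mc{T}_1}\cdot \ell\ge K\ge 1$, so the numerator is bounded below by $1$.

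Combining these two observations yields the quantitative bound
\[
v\cdot \ell \;\ge\; \frac{1}{E[\mc{T}_1\mid D_0=\infty]} \;>\; 0,
\]
which is exactly the claim. There is no real obstacle here: all the work is in the regeneration construction from \cite{RWDRE} summarised just before the lemma; once that is granted, the positivity of the numerator and the finiteness of the denominator are immediate from the respective definitions and hypotheses.
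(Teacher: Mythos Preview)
Your proposal is correct and follows essentially the same route as the paper: both use the ratio formula for $v\cdot\ell$ displayed just before the lemma, observe that $\mc{T}_1\ge 1$ so the denominator is finite and positive under the hypothesis, and that $X_{\mc{T}_1}\cdot\ell>0$ from the definition of the ladder times. Your inductive verification that in fact $X_{T_k}\cdot\ell\ge k$ (hence $X_{\mc{T}_1}\cdot\ell\ge 1$) is a slightly more explicit version of what the paper compresses into the one-line remark ``$0<X_{\mc{T}_1}\cdot\ell<\mc{T}_1$ (by definition of $\mc{T}_i$)''.
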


\medskip

The corresponding criterion for an invariance principle is the following, which follows immediately using methods of \cite{Sz00}.

\medskip

\begin{LEM}
\label{lem:CLTcriterion}
Assume \eqref{eqn:transience} as well as Conditions \ref{cond:orthogonal} and \ref{cond:ddim}. Assume also that $E[\mc{T}^{2}_1\mid D_0=\infty]<\infty$. Then there exists a non-negative definite 
matrix $\Sigma$ (and $v\in \R^d\setminus \{o\}$) such that under the annealed/averaged measure $P$,
$$
\Big(\frac{X_{\lfloor nt\rfloor}-vnt}{\sqrt{n}}\Big)_{t\ge 0}\Rightarrow(Z_t)_{t\ge 0},\qquad \textrm{ as }n \ra \infty,
$$
where $Z_t$ is a $d$-dimensional Brownian motion with covariance matrix $\Sigma$,
%continuous Gaussian process with independent increments with mean $E[X_t]=o$ and variance-covariance matrix $Var(X_t)=t\Sigma$, 
and $\Rightarrow$ denotes weak convergence.
\end{LEM}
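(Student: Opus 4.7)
The plan is to apply the standard renewal-based invariance principle in the style of \cite{Sz00}, using the regeneration structure described preceding Lemma \ref{lem:ballisticitycriterion}. The essential input is that under the annealed measure $P$, the blocks $(Y_k,\tau_k):=(X_{\mc{T}_{k+1}}-X_{\mc{T}_k},\mc{T}_{k+1}-\mc{T}_k)$ for $k\ge 1$ are i.i.d., with the common law of $(X_{\mc{T}_1},\mc{T}_1)$ conditional on $D_0=\infty$. The hypothesis $E[\mc{T}_1^2\mid D_0=\infty]<\infty$ gives $E[\tau_1^2]<\infty$, and since $\|Y_1\|\le\tau_1$, also $E[\|Y_1\|^2]<\infty$.

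I would carry out the following steps. First, set $v=E[Y_1]/E[\tau_1]$, which is consistent with the velocity in Theorem \ref{thm:old}(c) and is nonzero by Lemma \ref{lem:ballisticitycriterion}. The centred sums $S_n=\sum_{k=1}^n(Y_k-v\tau_k)$ are sums of i.i.d.\ mean-zero $\R^d$-valued random vectors with finite covariance matrix $\tilde\Sigma$, so Donsker's theorem yields $S_{\lfloor n\cdot\rfloor}/\sqrt{n}\Rightarrow W$ for a Brownian motion $W$ with covariance $\tilde\Sigma$.

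Second, define $N(t)=\sup\{k\ge 0:\mc{T}_k\le t\}$; the strong law of large numbers gives $N(nt)/n\to t/E[\tau_1]$ a.s. A standard time-change/composition argument then produces
\[
\frac{X_{\mc{T}_{N(nt)}}-vnt}{\sqrt{n}}\Rightarrow (B_t)_{t\ge 0},
\]
where $B$ is Brownian with covariance $\Sigma=\tilde\Sigma/E[\tau_1]$. The error from replacing $X_{\mc{T}_{N(nt)}}$ by $X_{\lfloor nt\rfloor}$ is at most $\max_{k\le N(nT)+1}\tau_k$, which is $o(\sqrt{n})$ in probability since the $\tau_k$ have finite second moments; and the first block $(X_{\mc{T}_1},\mc{T}_1)$, whose distribution is not the conditional one, contributes only an $O(1)$ shift that vanishes under $\sqrt{n}$ scaling.

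The main obstacle is modest: essentially all the work is encoded in the regeneration structure of \cite{RWDRE} recalled above, and once one has i.i.d.\ blocks with finite second moments the conclusion follows from the standard arguments in \cite{Sz00}. The one point requiring care is handling the initial block — the non-elliptic setting does not change this, but one must keep track of the fact that the first block has the unconditional law and decouple it cleanly from the i.i.d.\ tail before invoking Donsker and the time change.
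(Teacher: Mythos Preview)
Your proposal is correct and is essentially the same approach as the paper's: the paper simply states that the lemma ``follows immediately using methods of \cite{Sz00}'' once the regeneration structure from \cite{RWDRE} is in place, and you have spelled out exactly that standard renewal/Donsker argument. The only point the paper adds, which you do not mention, is the observation that ellipticity in \cite{Sz00} is also used to prove non-degeneracy of $\Sigma$, and that this can genuinely fail here --- hence the statement only claims non-negative definiteness.
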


\medskip

Note that ellipticity enters the arguments of \cite{Sz00} in two ways: to obtain the regeneration structure, and to prove non-degeneracy of the covariance matrix. The former was extended to the non-elliptic case in \cite{RWDRE}, so those arguments carry over. The latter can actually fail in our setting (which is why we only claim non-negative definiteness of the covariance). For example, a RWRE in which $\mathcal{G}_x$ is either $\uparrow$ or $\rightarrow$ satisfies our hypothesis, yet its covariance is degenerate, because $X_n-\frac{n}2(e_1+e_2)$ is 1-dimensional. %Do we want to find conditions that guarantee non-degeneracy?

\section{Proof of Theorem \ref{thm:main}}
\label{sec:proof_main}
Fix $d,\alpha, \beta, \kappa, \ell,\ell'$ as in the theorem.  Without loss of generality we may assume that $\|\ell\|=\|\ell'\|=1$.  By hypothesis (a),  we may apply \cite[Theorem 2.7]{RWDRE} to conclude that $P(A_+^{\ell})=1$. Therefore the regeneration structure exists as described above.

Since $q=P(D_0=\infty)>0$ we can define $P_0(\cdot)$ to be the conditional probability measure $P(\cdot\mid D_0=\infty)$. 
%Let us describe a slightly modified regeneration structure (from that given earlier).  Let $T_0=M_0=0$ and let $T_1=\inf\{n:X_n\cdot \ell>0\}$.  For $k\ge 1$ and $T_k<\infty$ let $D_k=\inf\{n>T_k:X_n\cdot \ell<X_{T_k}\cdot \ell\}$.  If $D_k<\infty$ then we let $M_k=\sup\{X_n\cdot \ell:n\le D_k\}$ and $T_{k+1}=\inf\{n>D_k:X_n\cdot \ell>M_k\}$.  
%Let $K=\inf\{k\ge 1:D_k<\infty\}$.  Then $K\ge 1$ is geometrically distributed with $\P(K> k)=(1-q)^{k}$, and $T_K=\inf\{n\ge 1:X_{n}\cdot \ell \in \mc{D}\}$.  
We set $\mc{T}=\mc{T}_1=T_K$.  Note that $X_{\mc{T}}\cdot \ell\in (M_{K-1},M_{K-1}+2]$. 

To prove Theorem \ref{thm:main} note that by Lemmas \ref{lem:ballisticitycriterion} and \ref{lem:CLTcriterion} it suffices to find $C,\gamma,\delta>0$ such that 
\begin{equation}
\label{eqn:estimategoal}
P_0(\mc{T}>n)\le Ce^{-\gamma n^\delta}, \qquad \textrm{ for every }n.
\end{equation}
Choose $\alpha_1,\alpha_2,\alpha_3$ 
%\alpha_1, \alpha_2, \alpha_3>0$ 
such that $0<\alpha_3<\alpha_2<\alpha_1+\alpha_2<\alpha$, and 
%$\alpha_1+\alpha_2<\alpha$ and $\alpha_3<\alpha_2$. 
%For $x\in \Z^d$ let $D_x$ 
let $F_n$ be the event that $\mc{C}_o\subset -n^{\alpha_3}\ell+\mc{K}_{\kappa,\ell}$.  Then  
%and $D=D_o$. Recalling the notation of Section \ref{sec:ball}, observe that
\begin{equation}
P_0(\mc{T}>n)\le P_0(F_n^c) +P_0(F_n, \mathcal{T}>n, M_{K-1}\le n^{\alpha})+P_0(F_n,  M_{K-1}>n^{\alpha}).\label{firstbreak}
\end{equation}
Note that $P_0(A)\le q^{-1}P(A)$ for any $A$, so hypothesis (a) of the Theorem shows that there exist $C_1,\gamma_1>0$ such that 
\begin{equation}
P_0(F_n^c)\le q^{-1}P(F_n^c)\le C_1e^{-\gamma_1 n^{\beta\alpha_3}}.\label{b1}
\end{equation}

To bound the second term on the RHS of \eqref{firstbreak}, note that the diameter of $\{x\in -n^{\alpha_3}\ell +\mc{K}_{\kappa,\ell}: x\cdot\ell\le n^\alpha\}$ is at most $C_3 n^\alpha$ for some $C_3$. Therefore if $F_n$ occurs, $\mathcal{T}>n$, and $M_{K-1}\le n^{\alpha}$ then $\max_{k\le n}X_k\cdot \ell\le n^{\alpha}$ and $R_n\le C_3n^\alpha$. Hypothesis (b) of the theorem then implies that there exist $C_2,\gamma_2>0$ such that 
\begin{align}
\nn P_0(F_n, \mathcal{T}>n, M_{K-1}\le n^{\alpha})&\le q^{-1}P(F_n, \mathcal{T}>n, M_{K-1}\le  n^{\alpha})\\
&\le C_2e^{-\gamma_2 n^\beta}.\label{b2}
\end{align} 

For the third term on the RHS of \eqref{firstbreak}, observe that if $K\le n^{\alpha_1}$ and $\Delta_k\le n^{\alpha_2}$ for each $k< K$ then $M_{K-1}=\sum_{k=0}^{K-1}\Delta_k\le n^{\alpha_1}\cdot n^{\alpha_2}<n^\alpha$. Therefore this term is bounded above by 
$$
%P_0(F, M_{K-1}>n^\alpha)\le 
P_0(K>n^{\alpha_1})+P_0(F_n, \, \exists k< K\le n^{\alpha_1} \textrm{ with }\Delta_k>n^{\alpha_2}).
$$
Since $K$ is geometrically distributed under $P$, the first term satisfies
\begin{align}
P_0(K>n^{\alpha_1})\le q^{-1}(1-q)^{n^{\alpha_1}}.\label{b3}  
\end{align}
It therefore remains to bound the quantity
$$
q^{-1}P(F_n, \, \exists k<K\le n^{\alpha_1} \textrm{ with }\Delta_k>n^{\alpha_2})
%\label{lastone}.
$$

Observe that if $\Delta_k>n^{\alpha_2}$ then there is a $j<D_{k}$ such that 
$$
X_{D_{k}}\cdot\ell+n^{\alpha_2}<  X_j\cdot\ell\le X_{D_{k}}\cdot\ell +n^{\alpha_2}+2.
%<-n^{\alpha_2}.
$$
On the event 
$
\left\{\exists k<K\le n^{\alpha_1} \textrm{ with }\Delta_k>n^{\alpha_2}\right\}
$, 
if $k_1< n^{\alpha_1}$ is the first $k$ such that  $\Delta_k>n^{\alpha_2}$, and $j_1$ is the corresponding $j$ then $x\equiv X_{j_1}\in \mc{C}_o$ satisfies $\mc{C}_{x}\not\subset x-n^{\alpha_2}\ell + \mc{K}_{\kappa,\ell}$ and 
\begin{align*}
0\le x\cdot \ell\le &(k_1-1)n^{\alpha_2}+n^{\alpha_2}+2\le (k_1+2)n^{\alpha_2}\le 2n^{\alpha_1}n^{\alpha_2}
\le  2n^{\alpha}.
\end{align*}
On the event $F_n$, there are at most $C_4(n^{\alpha})^d$ points $x$ satisfying $x\in \mc{C}_o$ and $0\le x\cdot\ell \le 2n^{\alpha}$, which we collect in a (non-random) set $J$.
Therefore by hypothesis (a) and translation invariance,
\begin{align}
\nn P(F_n, \, \exists k< K\le n^{\alpha_1} \textrm{ with }\Delta_k>n^{\alpha_2})&\le P\Big(\bigcup_{x\in J}\{\mc{C}_{x}\not\subset x-n^{\alpha_2}\ell + \mc{K}_{\kappa,\ell}\}\Big)\\
&\le \sum_{x\in J}P\left(\mc{C}_{x}\not\subset x-n^{\alpha_2}\ell + \mc{K}_{\kappa,\ell}\right)\nn\\
&\le C_1C_4n^{d\alpha}e^{-\gamma_1 n^{\beta\alpha_2}}.\label{b4}
\end{align}
Every one of the bounds \eqref{b1},\eqref{b2},\eqref{b3},\eqref{b4} can be rewritten as $Ce^{-\gamma n^\delta}$ for some single choice of $\delta$ and $\gamma$, so combining them establishes the desired bound \eqref{eqn:estimategoal}.\Qed

\section{Proof of Proposition \ref{prp:transverse} }
\label{sec:propositionproof}

The following Lemma, when applied to the transverse walk $X'$ proves Proposition \ref{prp:transverse}(a).

\medskip

\begin{LEM}
\label{lem:martrange} Let $M_k$ be a submartingale with respect to a filtration $\mc{F}_k$. Assume that $M_0=0$, $\delta\le E[(M_{k+1}-M_k)^{2}\mid\mc{F}_k]$ for some $\delta>0$, and $|M_{k+1}-M_k|\le m$ for some $m<\infty$. Then there exist $C,m_0,\gamma>0$, depending only on $\delta$ and $m$, such that
$$
P\big(\max_{k\le n}|M_k|\le y\big)\le Ce^{-\gamma n/y^{2}}
$$ 
for every $n\ge 1$ and $y\ge m_0$.
\label{lem:marts}
\end{LEM}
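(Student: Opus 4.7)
The plan is to reduce the lemma to a single-block exit estimate and then iterate. Define $\tau = \inf\{k\ge 0 : |M_k|>y\}$, so that $\{\max_{k\le n}|M_k|\le y\} = \{\tau>n\}$, and the goal is to show $P(\tau>n) \le Ce^{-\gamma n/y^2}$.

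The core ingredient I would establish is the following single-block estimate: there exists $T_0 = T_0(\delta,m)$ such that for every $y \ge m_0 := m$ and every $\mathcal{F}_0$-measurable starting value with $|M_0|\le y$,
\[
P(\tau \le T_0 y^2 \mid \mathcal{F}_0) \ge \tfrac12.
\]
To prove it, apply optional stopping at $\sigma := \tau \wedge T_0 y^2$ to the martingale $M_k^2 - 2U_k - V_k$, where $U_k = \sum_{j<k} M_j d_j$ with $d_j = E[\Delta M_j\mid\mathcal{F}_j]\in[0,m]$, and $V_k = \sum_{j<k}\sigma_j^2$ with $\sigma_j^2 = E[(\Delta M_j)^2\mid\mathcal{F}_j]\ge\delta$. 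This yields
\[
E[M_\sigma^2\mid\mathcal{F}_0] = M_0^2 + 2E[U_\sigma\mid\mathcal{F}_0] + E[V_\sigma\mid\mathcal{F}_0].
\]
The left-hand side is at most $(y+m)^2\le 4y^2$ (overshoot at $\tau$ is at most $m$). The variance sum satisfies $E[V_\sigma\mid\mathcal{F}_0]\ge \delta\,E[\sigma\mid\mathcal{F}_0]$. For the cross term, $|M_j|\le y$ on $\{j<\tau\}$ and $d_j\ge 0$ give $U_\sigma \ge -y A_\sigma$ with $A_\sigma = \sum_{j<\sigma}d_j$; and from the Doob decomposition $M_k = N_k + A_k$ (with $N_0 = M_0$), optional stopping on the martingale $N$ yields $E[A_\sigma\mid\mathcal{F}_0] = E[M_\sigma\mid\mathcal{F}_0]-M_0 \le 2y+m$. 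Collecting, $\delta\,E[\sigma\mid\mathcal{F}_0]\le C(m)\,y^2$, and Markov's inequality gives $P(\sigma<T_0 y^2 \mid\mathcal{F}_0)\ge \tfrac12$ for $T_0$ sufficiently large in terms of $\delta$ and $m$.

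With the single-block estimate in hand, I would iterate via the submartingale Markov property. On $\{\tau>jT_0 y^2\}$ the value $M_{jT_0 y^2}$ lies in $[-y,y]$, and the shifted process $M^{(j)}_k := M_{jT_0 y^2+k}$ is again a submartingale with respect to the shifted filtration, inheriting the bounded-increments and variance-lower-bound hypotheses. Applying the single-block estimate conditionally on $\mathcal{F}_{jT_0 y^2}$ yields $P(\tau>(j+1)T_0 y^2 \mid \mathcal{F}_{jT_0 y^2}) \le \tfrac12$ on $\{\tau>jT_0 y^2\}$, whence by induction $P(\tau>jT_0 y^2)\le 2^{-j}$. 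Taking $j = \lfloor n/(T_0 y^2)\rfloor$ gives $P(\tau>n)\le 2\cdot 2^{-n/(T_0 y^2)}$, which is the desired $Ce^{-\gamma n/y^2}$ with $C=2$ and $\gamma = (\log 2)/T_0$.

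The step requiring the most care is the cross term $\sum_{j<\sigma} M_j d_j$: in the pure martingale case $d_j\equiv 0$ and its absence gives the clean bound immediately, but for a submartingale a crude estimate would contribute $O(y\,m\,T_0 y^2)$ and overwhelm the $y^2$ terms. The rescue is the telescoping identity $E[A_\sigma\mid\mathcal{F}_0] = E[M_\sigma\mid\mathcal{F}_0] - M_0 = O(y)$: because $\sigma$ keeps $M$ inside $[-y,y]$, the \emph{total} drift accumulated up to $\sigma$ is only $O(y)$ rather than $O(\sigma)$, and so the cross contribution is only $O(y^2)$, matching the other two terms and allowing the Markov-inequality bound to go through.
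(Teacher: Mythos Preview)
Your argument is correct and genuinely different from the paper's. The paper constructs an explicit Lyapunov function: with $g(u)=\cos(\frac{\pi}{4}+u)$ it shows, via a third-order Taylor expansion, that $e^{\gamma k/y^2}g(M_k/(2y))$ is a supermartingale while $|M_k|\le 2ay$, and then a single application of optional sampling yields the bound directly. The submartingale drift is absorbed there by the sign condition $g'\le 0$ on the relevant interval. By contrast, you bound $E[\sigma\mid\mathcal{F}_0]$ by a second-moment/Doob-decomposition computation and then iterate in blocks of length $T_0y^2$.

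The two key ideas therefore differ: the paper exploits that $g$ is an eigenfunction of $-\partial^2$ (the quasi-stationary heuristic), while your crucial step is the telescoping identity $E[A_\sigma\mid\mathcal{F}_0]=E[M_\sigma\mid\mathcal{F}_0]-M_0=O(y)$, which tames the cross term $\sum M_jd_j$ that would otherwise swamp the estimate. Your route is arguably more elementary (no special function, no Taylor remainder bookkeeping) and the drift-control trick is robust; the paper's route is shorter once the supermartingale is in hand and makes the exponential rate transparent without an iteration. Minor cosmetic points: you should round $T_0y^2$ to an integer for the block iteration, and note that optional stopping is justified because $\sigma$ is bounded and all increments are bounded---both harmless.
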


\medskip

To see that this implies Proposition \ref{prp:transverse}(a), note that as the walk $X$ is a nearest neighbour walk, we may apply Lemma \ref{lem:marts} to the submartingale $X_k'$ to see that there exist $C',\gamma'>0$ such that for every $n \in \N$,
\begin{align}
P\big(\max_{k\le n}|X'_k|\le y\big)\le C'e^{-\gamma' n/y^{2}}.\label{marty}
\end{align}
Letting $0<\alpha<\frac12$ and $y=Cn^{\alpha}$, then \eqref{marty} implies that for every $n \in \N$
\begin{align}
P\big(\max_{k\le n}|X'_k|\le Cn^{\alpha}\big)\le C'e^{-\frac{\gamma'}{C^{2}}n^{1-2\alpha}},\label{rangebound}
\end{align}
which establishes Proposition \ref{prp:transverse}(a) with $\beta\in (0,1-2\alpha]$, $C_2=C'$ and $\gamma_2=\frac{\gamma'}{C^{2}}$.
%Take $0<\alpha<\frac12$ and set $\beta=1-2\alpha>0$. Applying Lemma \ref{lem:marts} with $y=n^\alpha$ gives the desired inequality for $n$ sufficiently large. Adjusting $C$ now gives it for every $n$.

\bigskip

\noindent {\em Proof of Lemma \ref{lem:martrange}}.
Our proof is motivated by the quasi-stationary distribution for Brownian motion on an interval. Consider $g(u)=\cos(\frac{\pi}{4}+u)$. Then $g''+g=0$, $g\le 1$, and $|g'''|\le 1$. Fix $\frac12<a<\frac{\pi}{4}$. We can choose $\epsilon>0$ so that for $u\in[-a,a]$ we have $g'(u)\le 0$ and $0<\epsilon\le g(u)$. Let $\Delta=M_{k+1}-M_k$ and $\gamma=\frac{\delta}{16}$. By Taylor's theorem, provided that $|\frac{M_k}{2y}|\le a$ we have
%\begin{align}
%-\frac{m^3}{6y^3}\le %E\left[g\Big(\frac{M_{k+1}}{y}\Big)-g\Big(\frac{M_{k}}{y}\Big)-\frac{\Delta}{y}g'\Big(\frac{M_{k}}{y}\Big)
%-\frac{\Delta^{\sss(2)}}{2y^{\sss(2)}}g''\Big(\frac{M_{k}}{y}\Big) \, \Big| \, \mc{F}_k\right]
%\le \frac{m^3}{6y^3}.
%\end{align}

\begin{align}
\nonumber
E\left[g\Big(\frac{M_{k+1}}{2y}\Big)\Big|\mc{F}_k\right]
\le &E\left[g\Big(\frac{M_{k}}{2y}\Big)+\frac{\Delta}{2y}g'\Big(\frac{M_{k}}{2y}\Big)
+\frac{\Delta^2}{8y^2}g''\Big(\frac{M_{k}}{2y}\Big)+\frac{m^3}{48y^3}\Big|\mc{F}_k\right]\\ 
=&g\Big(\frac{M_{k}}{2y}\Big)-g\Big(\frac{M_{k}}{2y}\Big)E\left[\frac{\Delta^2}{8y^2} | \mc{F}_k\right]+\frac{m^3}{48y^3}\nn\\
&+g'\Big(\frac{M_{k}}{2y}\Big)E\left[\frac{\Delta}{2y}|\mc{F}_k\right].\label{jolly1}
\end{align}
Since $M$ is a submartingale we have $\E[\Delta|\mc{F}_k]>0$.  Now using the facts that $y>0$, $|\frac{M_k}{2y}|\le a$ and $g'<0$ on $[-a,a]$ we can bound the final term \eqref{jolly1} above by 0 to get
\begin{align}
E\left[g\Big(\frac{M_{k+1}}{2y}\Big)\Big|\mc{F}_k\right]&\le g\Big(\frac{M_{k}}{2y}\Big)\left(1-\frac{E\left[\Delta^2 | \mc{F}_k\right]}{8y^2}\right)+\frac{m^3}{48\epsilon y^3}\epsilon \nonumber \\
&\le\Big(1-\frac{\delta}{8y^2}+\frac{m^3}{48\epsilon y^3}\Big)g\Big(\frac{M_{k}}{2y}\Big),\label{martlast}
\end{align}
where we have used the fact that $g(M_k/(2y))\ge \epsilon$ when $|\frac{M_k}{2y}|\le a$.

Choose $y_0>0$ such that $\frac{m^3}{48\epsilon y_0}<\frac{\delta}{16}$ and $\frac12+\frac{m}{2y_0}\le a$. Let $y\ge y_0$. Then \eqref{martlast} is bounded above by 
$(1-\frac{\delta}{16y^2})g(\frac{M_k}{2y})\le e^{-\gamma/y^2}g(\frac{M_k}{2y})$, so $e^{\gamma k/y^2}g(\frac{M_k}{2y})$ is a supermartingale (while $|\frac{M_k}{2y}|\le a$). Let $T=\inf\{n: |M_n|> y\}$.  Since $|M_{(T-1)\wedge n}|\le y$ it follows that $|M_{T\wedge n}|\le y+m$, so $|\frac{M_{T\wedge n}}{2y}|\le \frac12+\frac{m}{2y}\le a$ by choice of $y$.  

Now observe that
\begin{align*}
P\big(\max_{k\le n}|M_k|\le y\big)\le P(T>n)&\le e^{-\gamma n/y^2}\min\big(e^{\gamma n/y^2},\E[e^{\gamma T/y^2}]\big),\\
&= e^{-\gamma n/y^2}\E\big[e^{(\gamma \wedge T)n/y^2}\big]\\
&\le \epsilon^{-1} e^{-\gamma n/y^2}E\left[e^{\gamma(T\land n)/y^2}g\Big(\frac{M_{T\land n}}{2y}\Big)\right]\\
&\le \epsilon^{-1} e^{-\gamma n/y^2}E\left[g(0)\right]=\frac{1}{\sqrt{2}\epsilon} e^{-\gamma n/y^2},
\end{align*}
where we have used optional sampling to obtain the last inequality.
%By optional sampling, 
%$$
%\epsilon e^{\gamma n/y^2}P(T\ge n)\le E\left[e^{\gamma(T\land n)/y^2}g\Big(\frac{M_{T\land n}}{y}\Big)\right]\le 1
%$$
%which shows the desired inequality.
\qed

\bigskip

\noindent
{\bf Remark:} For a related estimate see \cite[Proposition 4.1]{MPRV}. 

\medskip

%\footnote{Note that with the $2d$ orthant model we could make more explicit estimates. With the choice of $\ell$ and $v$ given earlier, 
%Theorem 2 of \cite{Chen06} implies that if $b_n\uparrow \infty$ and $b_n/n \ra 0$ then
%\begin{align}
%\lim_{n\ra \infty} b_n^{-1}\log P\left(\max_{k\le n}|X^v_k|\le \lambda \sqrt{\frac{n}{b_n}}\right)=-\frac{\pi^{\sss(2)}}{2\lambda^{\sss(2)}}.
%\end{align}
%We could now proceed as above.  Can we prove the same for the uniform RWRE with support $(\rightarrow,\NSW)$? We'd have to go through the whole oriented percolation argument.}

\bigskip

For 2-valued models in which the local environments are $\gamma^{\sss(1)}$ or $\gamma^{\sss(2)}$, it is useful to consider the local biases $\vec{u}^{\sss(i)}=\sum_{i=1}^d (\gamma^{\sss(i)}(e_i)-\gamma^{\sss(i)}(-e_i))e_i$.
Consider, for example, a 2-valued model in 2 dimensions with $\mc{S}(\gamma^{\sss(1)})=e_1$ and $\mc{S}(\gamma^{\sss(2)})=\{-e_1,e_2,-e_2\}$ (i.e.~the induced random graph is the same as in Example \ref{exa:E_NSW}).  If $\vec{u}_2^{\sss(2)}\ne 0$ then we can find a direction $\ell$ in which both environments induce a  drift.  If (as in Example \ref{exa:E_NSW}) $\vec{u}_2^{\sss(2)}= 0$ then  $X_n\cdot e_2$ is a martingale that does not move when $X_n$ is at a $\gamma^{\sss(1)}$ environment.  This martingale therefore does not satisfy the conditions of Lemma \ref{lem:martrange}.  Nevertheless Corollary \ref{cor:2valued} shows that it is still ballistic.

The following result gives various cases in which Lemma \ref{lem:martrange} applies directly to 2-valued models. 

\medskip

\begin{LEM}
\label{lem:2valuedrange}
Let $\mu(\gamma^{\sss(1)})=p=1-\mu(\gamma^{\sss(2)})\in (0,1)$ be a 2-valued model satisfying Conditions \ref{cond:orthogonal} (for a set $V$) and \ref{cond:ddim} with $d\ge 2$.   
\begin{itemize}
%\item[(I)] If $d\ge 2$ then either there exists $\ell\ne o$ such that $\vec{u}^{\sss(i)}\cdot \ell>0$ for $i=1,2$, or there exists $\ell'\ne o$ and $\epsilon>0$ such that $X_n\cdot \ell'$ is a martingale with at least one of $\gamma^{\sss(1)},\gamma^{\sss(2)}$ having positive probability of moving at least $\epsilon$ in direction $\ell'$.
\item[(I)] If $\vec{u}^{{\sss(1)}}\ne o$ and  $\vec{u}^{\sss(2)}\ne o$ and $\vec{u}^{\sss(2)}\ne -c\vec{u}^{\sss(1)}$ for any $c>0$ then there exists $\ell'\ne o$ such that $\vec{u}^i\cdot \ell'>0$ for $i=1,2$. For this $\ell'$, Lemma \ref{lem:martrange} applies to the transverse walk $X'_k$.
\item[(II)] If $\vec{u}^{\sss(2)}=-c\vec{u}^{\sss(1)}$ for some $c>0$, and if $\vec{u}^{\sss(1)}\perp \ell'$ for some $\ell'= \sum_{e\in V}x_e e$ with all $x_e\neq 0$, then Lemma \ref{lem:martrange} applies to $X'_k$.
\item[(III)] If $\vec{u}^{\sss(2)}=o$ then Lemma \ref{lem:martrange} applies to $X'_k$, for one of $\ell'=\pm\sum_{e\in V}e$. 
\end{itemize}
\end{LEM}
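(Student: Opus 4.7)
The plan is to verify, in each of the three cases, that the transverse walk $M_k=X'_k=X_k\cdot\ell'$ satisfies the hypotheses of Lemma \ref{lem:martrange} under the annealed measure $P$ and filtration $\mc{F}'_k$. Boundedness of steps is automatic, so only the submartingale property and the conditional second-moment lower bound remain. The key structural observation, exploiting the 2-valued assumption, is that conditional on $\mc{F}'_k$ the law of $\omega_{X_k}$ is a Bayesian mixture $p_k\delta_{\gamma^{\sss(1)}}+(1-p_k)\delta_{\gamma^{\sss(2)}}$ for some $\mc{F}'_k$-measurable $p_k\in[0,1]$ (the likelihood factor coming from any previous transitions out of $X_k$). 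Consequently, writing $\Delta_k=(X_{k+1}-X_k)\cdot\ell'$, both $E[\Delta_k\mid\mc{F}'_k]$ and $E[\Delta_k^2\mid\mc{F}'_k]$ are convex combinations of the corresponding single-environment quantities. It therefore suffices to exhibit an $\ell'$ for which $\vec{u}^{\sss(i)}\cdot\ell'\ge 0$ and $E_{\gamma^{\sss(i)}}[(e\cdot\ell')^2]>0$ hold for both $i=1,2$; the minimum of the latter two positives supplies the required $\delta$.

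For case (I), the nonantiparallel hypothesis says the two open half-spaces $H_i=\{\ell:\vec{u}^{\sss(i)}\cdot\ell>0\}$ intersect; concretely $\ell':=\vec{u}^{\sss(1)}/\|\vec{u}^{\sss(1)}\|+\vec{u}^{\sss(2)}/\|\vec{u}^{\sss(2)}\|$ is nonzero and satisfies $\vec{u}^{\sss(i)}\cdot\ell'=\|\vec{u}^{\sss(i)}\|(1+\cos\theta)>0$, where $\theta\in[0,\pi)$ is the angle between the two drift vectors. For this $\ell'$ both drifts are strictly positive, and since $E_{\gamma^{\sss(i)}}[e\cdot\ell']=\vec{u}^{\sss(i)}\cdot\ell'>0$ forces the existence of some $e\in\mc{S}(\gamma^{\sss(i)})$ with $e\cdot\ell'>0$, the second moment is also strictly positive.

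For cases (II) and (III) the drift condition is arranged by the hypothesis: in (II) both drifts vanish by $\vec{u}^{\sss(1)}\perp\ell'$ and $\vec{u}^{\sss(2)}=-c\vec{u}^{\sss(1)}$, so $M_k$ is a true martingale; in (III), $\vec{u}^{\sss(2)}=o$ kills the type-$2$ drift in every direction, and the sign of $\ell'=\pm\sum_{e\in V}e$ is chosen so that $\vec{u}^{\sss(1)}\cdot\ell'\ge 0$. The second-moment bound is where Condition \ref{cond:orthogonal} plays its decisive role: it guarantees some $e\in\mc{S}(\gamma^{\sss(i)})\cap V$ for each $i=1,2$, and since $V$ is an orthonormal set and $\ell'=\sum_{e'\in V}x_{e'}e'$ has all coefficients $x_{e'}\ne 0$ (by assumption in (II), and with $x_{e'}=\pm 1$ in (III)), we compute $e\cdot\ell'=x_e\ne 0$, so that $E_{\gamma^{\sss(i)}}[(e'\cdot\ell')^2]\ge\gamma^{\sss(i)}(e)\,x_e^2>0$.

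The argument is largely a mechanical assembly of these pieces once the reduction to environment-by-environment checks is in place. The only conceptual point requiring care is recognising why Condition \ref{cond:orthogonal} is exactly what powers the variance lower bound in the delicate cases (II)--(III), where drifts are unavailable: orthogonality of $V$ combined with the non-vanishing of all $V$-coordinates of $\ell'$ prevents any cancellation of projected steps from $V\cap\mc{S}(\gamma^{\sss(i)})$. I do not anticipate any further obstacle beyond making this structural observation precise.
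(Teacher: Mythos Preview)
Your proposal is correct and follows the same approach as the paper: in each case one checks that, in either of the two possible local environments, the projected step has nonnegative mean and can take a nonzero value, which (via the convex-combination structure you describe, or equivalently by conditioning on the full filtration $\sigma(X_0,\dots,X_k,\omega)$) yields the submartingale and variance hypotheses of Lemma~\ref{lem:martrange}. Your explicit construction of $\ell'$ in case~(I) and the mixture justification for the transverse filtration are welcome details that the paper's proof leaves implicit.
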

\proof (I) If $\vec{u}^{{\sss(1)}}\ne o$ and  $\vec{u}^{\sss(2)}\ne o$ then the $\{\ell:\vec{u}^{\sss(i)}\cdot \ell>0\}$ are half spaces, which must intersect unless one is the negative of the other. The latter possibility is ruled out, since it would imply that
$\vec{u}^{\sss(2)}= -c\vec{u}^{\sss(1)}$ for some $c>0$. Therefore we can find an $\ell'$ in the intersection. It follows that $X'_k$ is a submartingale. Because $\vec{u}^{\sss(i)}\cdot \ell'\neq 0$ for each $i$, there is a positive probability of movement in either environment. 

%Note that if $\vec{u}^{\sss(1)}=o=\vec{u}^{\sss(2)}$ then the transverse walk in any direction $\ell'\ne o$ is a martingale.  In particular this holds for direction $\ell'=\sum_{e\in V}e$, and in this case by Condition \ref{cond:orthogonal}, both environments $\gamma^i$ satisfy the claim.  

%Otherwise without loss of generality we may assume that $\vec{u}^{\sss(1)}\ne o$.  If $\vec{u}^{\sss(2)}=c\vec{u}^{\sss(1)}$ for any $c\in \R$ then for any $\ell' \perp \vec{u}^{\sss(1)}$, the transverse walk in direction $\ell'$ is a martingale which satisfies the claim by Condition \ref{cond:ddim}.  

%Otherwise $\vec{u}^{\sss(1)}\ne o$ and $\vec{ u}^{\sss(2)}\ne c\vec{u}^{\sss(1)}$ for any $c\in \R$ (so $\vec{u}^{\sss(2)}\ne o$), and in this case $\{\ell:\vec{u}^{\sss(1)}\cdot \ell>0\}\cap \{\ell:\vec{u}^{\sss(2)}\cdot \ell>0\}\ne \varnothing$ as claimed.

%(II) This follows as in the last part of the proof of (I).

(II) $\vec{u}^{\sss(2)}\cdot \ell'=0=\vec{u}^{\sss(1)}\cdot \ell'$ so $X'_k$ is a martingale.  By Condition \ref{cond:orthogonal}, it is possible to take a step of size at least $\min_{e\in V}|x_e|$ in either environment. 

(III) Either $\ell'=\sum_{e\in V}e$ or $\ell'=-\sum_{e\in V}e$ will have $\vec{u}^{\sss(1)}\cdot \ell'\ge 0$, so that $X'_k$ is a submartingale. Again, there is a positive probability of movement in either environment by Condition \ref{cond:orthogonal}.
\qed

\blank{
\begin{LEM}
\label{lem:livesincone}
Let $\mu(\gamma^{\sss(1)})=p=1-\mu(\gamma^{\sss(2)})\in (0,1)$ be a 2-valued model. If hypothesis (a) of Theorem \ref{thm:main} holds, then at least one of $\gamma^{\sss(1)}$ or $\gamma^{\sss(2)}$ is supported on an orthogonal set.
\end{LEM}
\proof If neither $\gamma^{\sss(1)}$ nor $\gamma^{\sss(2)}$ is supported on an orthogonal set, there are $i,j$ such that $\pm e_i\in\mc{S}(\gamma^{\sss(1)})$ and $\pm e_j\in\mc{S}(\gamma^{\sss(2)})$. If $i=j$ then $\mathbb{Z}e_i\subset\mc{C}_0$ which contradicts hypothesis (a). 

So assume $i\neq j$. By Theorem 4.9 of \cite{HS_DRE1}, the intersection of $\mc{C}_o$ and the hyperplane $H$ spanned by $e_i$ and $e_j$ is infinite in every direction of $H$. This contradicts hypothesis (a).
\qed
\medskip
}

\medskip

For any 2-valued model $\mu(\gamma^{\sss(1)})=p=1-\mu(\gamma^{\sss(2)})\in (0,1)$, let $N^{\sss(i)}_n=\#\{0\le m<n:\omega_{X_m}=\gamma^{\sss(i)}\}$ and note that $N^{\sss(1)}_n+N^{\sss(2)}_n=n$.  Let us write $N_n$ for $N_n^{\sss(1)}$.

\medskip

\noindent {\em Proof of Proposition \ref{prp:transverse}(b)}.  If either $\vec{u}^{{\sss(1)}}$ or $\vec{u}^{{\sss(2)}}$ equals $o$, or if $\vec{u}^{\sss(2)}\neq -c\vec{u}^{\sss(1)}$ for any $c>0$, then the claim holds by Lemmas \ref{lem:martrange} and \ref{lem:2valuedrange}. 

So assume that $\vec{u}^{\sss(2)}= -c\vec{u}^{\sss(1)}\neq o$, for some $c>0$.  Write $\vec{u}=(u_1,\dots, u_d)$ for $\vec{u}^{\sss(1)}$.  Without loss of generality there exists $k\le d$ such that $e_i \cdot \vec{u}> 0$ for $i=1,\dots, k$, and $e_i \cdot \vec{u}=0$ for each $i=k+1,\dots, d$.

If $k>1$ then the vector $\ell'=(-(u_2+\dots+u_d),u_1, u_1, \dots, u_1)\perp \vec{u}$, and by Lemma \ref{lem:2valuedrange} the transverse walk $X'$ for this $\ell'$ is a martingale to which Lemma \ref{lem:martrange} applies.

Therefore we will assume for the rest of the proof that $k=1$, so $\vec{u}^{\sss(1)}=u_1e_1$ and $\vec{u}^{\sss(2)}=-cu_1e_1$.  For each $i\ge 2$, condition \ref{cond:ddim} implies that either $\gamma^{\sss(1)}(e_i)=\gamma^{\sss(1)}(-e_i)>0$ or $\gamma^{\sss(2)}(e_i)=\gamma^{\sss(2)}(-e_i)>0$ (or both).  
%If both hold for some $i$ then taking $\ell'=e_i$ we have that the transverse walk $X'$ for this $\ell'$ is a martingale satisfying Lemma \ref{lem:martrange}.  Similarly 
If $\gamma^{\sss(1)}(e_{i_1})=\gamma^{\sss(1)}(-e_{i_1})>0$ and $\gamma^{\sss(2)}(e_{i_2})=\gamma^{\sss(2)}(-e_{i_2})>0$ for some $i_1, i_2\ge 2$ (possibly equal) ,then let $\ell'=e_{i_1}+e_{i_2}$. We see that the transverse walk $X'$ for this $\ell'$ is a martingale, and Lemma \ref{lem:martrange} applies since $X'$ has a positive probability of moving in either environment.

It remains to handle the case that one of the $\gamma^{\sss(i)}$ (which we will take to be $\gamma^{\sss(1)}$) is supported on $\pm e_1$.  In this case by Condition \ref{cond:ddim}, $\gamma^{\sss(2)}(e_i)=\gamma^{\sss(2)}(-e_i)>0$ for each $i\ge 2$ (i.e.~we have basically reduced the problem to something like Example \ref{exa:E_NSW}).

Let $\delta>0$, and let $B_n=\{n^{-1}N_n\le 1-\delta\}$.  Then on $B_n$ we have at least $\delta n$ departures from $\gamma^{\sss(2)}$ sites by time $n$. Let $\ell'=(0,1,1,\dots,1)$ and let $X'_k=X_k\cdot \ell'$ be the transverse walk. Let $\tilde X_n$ be $X'_k$ time changed by $N_n^{\sss(2)}$, i.e.~so that time only advances at $\gamma^{\sss(2)}$ sites. It is a martingale and Lemma \ref{lem:martrange} applies to $\tilde X_n$, so 
there is a $C'$ with
$$
P\big(\max_{k\le n}|X'_k|\le y, B_n\big)\le P\big(\max_{k\le \delta n}|\tilde X_k|\le y\big)\le C'e^{-\gamma \delta n/y^{2}}.
$$ 
As in \eqref{rangebound} this implies that 
\begin{equation}
P(R_n< Cn^\alpha, B_n)\le C'e^{-\frac{\gamma\delta}{C^2}n^{1-2\alpha}}.
\label{firsthalf}
\end{equation}
We therefore take $\alpha<\frac12$ and $\beta=1-2\alpha$. 

Now let $\Xi_k=\pm 1$ according to whether the $k$th departure from a $\gamma^{\sss(1)}$ site is $\pm e_1$. In other words, the $\Xi_k$ are independent, with  $P(\Xi_k=1)=\gamma^{\sss(1)}(e_1)$ and $P(\Xi_k=-1)=\gamma^{\sss(1)}(-e_1)$, so $E[\Xi_k]=u_1$.  Let $Y_n=\sum_{k=1}^n \Xi_i$.  Choose $\delta<\frac{u_1}{4}$. On $B_n^c$ there are then at most $\frac{u_1n}{4}$ departures from $\gamma^{\sss(2)}$ sites by time $n$. So if $Y_{N_n}>\frac{u_1n}{2}$, it follows that $X_1\cdot e_1>\frac{u_1n}{4}$ and hence $R_n>\frac{u_1n}{4}$. Since $\alpha<\frac12$ for each 
 $C$ we have 
$Cn^\alpha<\frac{u_1n}{4}$ for all $n\ge n_C$.  Therefore, for such $n$,
$$
P(R_n< Cn^\alpha, B_n^c)
\le P(Y_{N_n} <\frac{u_1 n}{2}, B_n^c)\le \sum_{k=(1-\delta)n}^n P(Y_k< \frac{u_1 n}{2}).
$$
By Cramer's theorem (see e.g.~\cite{DeZ}), there exist $c$, $c'$ such that this is $\le nc'e^{-cn}$. Since $\beta<1$ we may combine this estimate with \eqref{firsthalf} to obtain the bound of Theorem \ref{thm:main}(b), for large $n$.  Raising these constants we obtain the bound for all $n$.
\qed

\section{Proof of Proposition \ref{prp:E+}}
\label{sec:percolation}
Consider a model with 2-valued support of the form $\mu(\mc{E}_+)=p$ (i.e.~$\mu(\gamma:\mc{S}(\gamma)=\mc{E}_+)=p$) and $\mu(\mc{E})=1-p$.  Define $\ell_+=\sum_{e\in \mc{E}_+}e=(1,\dots,1)$ and 
\begin{align}
p_c(d)=\inf\Big\{p>0:&\exists \kappa>0 \text{ such that }\nn\\
&\nu\big(\cup_{n=1}^{\infty}\{\mc{C}_o(p)\subset-n\ell_++\mc{K}_{\kappa, \ell_+}\}\big)=1\Big\}.\nn
\end{align} 
In other words, for $p>p_c(d)$ the forward cluster of such a model is contained in a cone.  It is an immediate consequence of \cite[Theorem 1.6]{HS_DRE2} that $p_c(d)>.5730$ for all $d\ge 2$.  Since $\mc{E}_+\subset  \mc{E}$, under the natural coupling of environments for all $p$ (i.e.~$\mc{G}_x=\mc{E}_+$ if and only if $U_x\le p$, where $U_x\sim U[0,1]$ are independent) $\mc{C}_o(p)$ is monotone decreasing in $p>p_c(d)$.  We conclude the following. 

\medskip

\begin{LEM}
\label{lem:pcd}
Suppose that $\mu(\mc{E}_+)=p$, $\mu(\mc{E})=1-p$.  Then for all $p>p_c(d)$ there exist $\kappa=\kappa(p,d)>0$ such that 
\begin{align}
\nu\big(\cup_{n=1}^{\infty}\{\mc{C}_o(p)\subset -n\ell_++\mc{K}_{\kappa,\ell_+}\}\big)=1,\label{yoyo1}
\end{align}
moreover $\kappa(p,d)$ is non-decreasing in $p$ for each $d$.  For $p<p_c(d)$, \eqref{yoyo1} fails for every $\kappa>0$.
\end{LEM}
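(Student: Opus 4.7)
\emph{Proof plan.} The key tool is the natural monotone coupling stated just above the lemma: let $U_x \sim U[0,1]$ be i.i.d., and take $\mc{G}_x(p) = \mc{E}_+$ when $U_x \le p$ and $\mc{G}_x(p) = \mc{E}$ otherwise. Since $\mc{E}_+ \subsetneq \mc{E}$, raising $p$ only removes outgoing edges from sites, so the forward cluster $\mc{C}_o(p)$ is pointwise non-increasing in $p$ under this coupling. Combined with the obvious inclusion $\mc{K}_{\kappa,\ell_+} \supseteq \mc{K}_{\kappa',\ell_+}$ whenever $\kappa \le \kappa'$, this yields the following transfer principle: if \eqref{yoyo1} holds $\nu$-almost surely at a pair $(p_0,\kappa_0)$, then it holds $\nu$-almost surely at every $(p,\kappa)$ with $p \ge p_0$ and $\kappa \le \kappa_0$, in fact with the same $n$.

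For the existence claim at $p > p_c(d)$, I would invoke the definition of $p_c(d)$ as an infimum to produce some $p_0 \in (p_c(d), p]$ together with $\kappa_0 > 0$ witnessing \eqref{yoyo1} at $(p_0,\kappa_0)$; the transfer principle then gives \eqref{yoyo1} at $(p,\kappa_0)$. Setting
$$
\kappa(p,d) := \tfrac{1}{2}\sup\big\{\kappa>0 : \text{\eqref{yoyo1} holds for this }p\big\},
$$
one obtains $\kappa(p,d) \ge \kappa_0/2 > 0$, and \eqref{yoyo1} holds at $(p,\kappa(p,d))$ because $\kappa(p,d)$ is strictly below the supremum and the event is monotone in $\kappa$. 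For the non-decreasing claim, the transfer principle shows that for $p_1 < p_2$ the set of valid $\kappa$'s at $p_1$ is contained in that at $p_2$, so its supremum (and hence $\kappa(p_1,d) \le \kappa(p_2,d)$) can only grow with $p$.

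The failure claim for $p < p_c(d)$ is tautological: if some $\kappa > 0$ gave $\nu(\cdot) = 1$, then $p$ would belong to the set whose infimum defines $p_c(d)$, forcing $p_c(d) \le p$ and contradicting $p < p_c(d)$. I do not foresee any real obstacle in this argument; it is essentially a formal consequence of the monotone coupling together with the infimum definition of $p_c(d)$, the only minor bookkeeping being to choose $\kappa(p,d)$ unambiguously (here handled by the factor $\tfrac{1}{2}$) so that the non-decreasing statement is well-posed even when the supremum is not attained.
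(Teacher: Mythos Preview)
Your proof is correct and follows essentially the same approach as the paper: the paper simply observes (in the paragraph preceding the lemma) that under the natural coupling $\mc{C}_o(p)$ is monotone decreasing in $p$, and then states the lemma without further argument, treating it as an immediate consequence of this monotonicity and the infimum definition of $p_c(d)$. Your write-up carries out the straightforward bookkeeping that the paper omits, including a clean choice of $\kappa(p,d)$ via the half-supremum trick to make the non-decreasing claim well-posed.
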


\medskip

Although we believe that Theorem \ref{thm:main}(a) does hold in this setting as soon as $p>p_c(d)$, Lemma \ref{lem:pcd} is not sufficient to establish that result as it does not give tail probabilities.  Let $\sigma_d$ be the connective constant for self-avoiding walks on the cubic lattice $\mathbb{Z}^d$, defined as $\lim_{N\to\infty}c_N^{1/N}$, where $c_N$ is the number of self-avoiding walks of length $N$. Let $p_d=1-\sigma_d^{-2}$.  The following result (based on \cite[Theorem 4.2]{HS_DRE1} and proved below) verifies that $p_c(d)\le p_d<1$ for each $d$, and gives bounds for the relevant tail probabilities when $p>p_d$.

\medskip

\begin{LEM}
\label{lem:SAW} 
Let $d\ge 2$. Consider an i.i.d. RWRE in which $\mu(\gamma:\mc{S}(\gamma)\subset\mc{E}_+)>p_d$.
Then  $\exists$  constants $C,\kappa,\gamma>0$ such that $\nu(\mc{C}_o\subset -n\ell_++\mc{K}_{\kappa,\ell_+})\ge 1-Ce^{-\gamma n}$ for every $n$. 
\end{LEM}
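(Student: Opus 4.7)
The strategy is a first-moment/union-bound estimate over self-avoiding $\mc{G}$-paths, directly analogous to the proof of Theorem 4.2 in \cite{HS_DRE1}. Write $p := \mu(\gamma : \mc{S}(\gamma) \subset \mc{E}_+)$, so the hypothesis reads $(1-p)\sigma_d^2 < 1$. The underlying idea is that each backward step of a $\mc{G}$-path must emanate from a ``bad'' site (one with $\mc{G}_x \not\subset \mc{E}_+$), an event of $\nu$-probability $1-p$, and that escaping the cone $-n\ell_+ + \mc{K}_{\kappa,\ell_+}$ forces many such backward steps.

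First I would reduce the cone exit event to the existence of a suitable self-avoiding walk. If $\mc{C}_o \not\subset -n\ell_+ + \mc{K}_{\kappa,\ell_+}$, some $y \in \mc{C}_o$ lies outside the cone, and any shortest $\mc{G}$-path from $o$ to $y$ is self-avoiding (loops can be removed). If such a SAW has length $N$ and contains $N^+$ steps in $\mc{E}_+$ and $N^- = N - N^+$ steps in $-\mc{E}_+$, then $y \cdot \ell_+ = N^+ - N^- = N - 2N^-$ and $\|y+n\ell_+\| \le \|y\| + n\sqrt{d} \le N + n\sqrt{d}$. The condition $(y+n\ell_+)\cdot\ell_+ < \kappa \|y+n\ell_+\|$ therefore rearranges into the key geometric lower bound
\begin{equation*}
N^- > \tfrac{1}{2}\bigl((1-\kappa)N + (d-\kappa\sqrt{d})\,n\bigr).
\end{equation*}

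Next I would bound the probability that a fixed SAW $(o,y_1,\dots,y_N)$ is a valid $\mc{G}$-path: each of its $N^-$ backward steps requires $\mc{G}_{y_i}\not\subset\mc{E}_+$, and since SAW vertices are distinct and $(\mc{G}_x)_{x\in\Z^d}$ is i.i.d., these events are independent, yielding $\nu(\text{path valid}) \le (1-p)^{N^-}$. A union bound over SAWs of length $N$ from $o$ (of which there are $c_N \le C' r^N$ for any fixed $r > \sigma_d$ and a suitable $C'$) gives
\begin{align*}
\nu\bigl(\mc{C}_o \not\subset -n\ell_+ + \mc{K}_{\kappa,\ell_+}\bigr)
&\le \sum_{N \ge 0} c_N\,(1-p)^{\frac{1}{2}[(1-\kappa)N + (d-\kappa\sqrt{d})\,n]}\\
&\le C'\,(1-p)^{(d-\kappa\sqrt{d})n/2}\sum_{N \ge 0}\bigl(r(1-p)^{(1-\kappa)/2}\bigr)^N.
\end{align*}

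To finish, I would choose $\kappa > 0$ small and $r > \sigma_d$ close enough to $\sigma_d$ that both $d-\kappa\sqrt{d} > 0$ and $r^2(1-p)^{1-\kappa} < 1$; both are possible since $(1-p)\sigma_d^2 < 1$. The geometric series in $N$ then converges to a constant, and the prefactor is $e^{-\gamma n}$ with $\gamma := \tfrac{1}{2}(d-\kappa\sqrt{d})\log(1-p)^{-1} > 0$, yielding the desired bound. The one delicate step is the geometric translation of the cone exit condition into a lower bound on $N^-$ that is simultaneously linear in $N$ and in $n$; once this is in hand, the rest is a standard first-moment summation enabled by the $\sigma_d^{-2}$ threshold hypothesis.
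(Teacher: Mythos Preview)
Your proposal is correct and follows essentially the same self-avoiding-walk first-moment argument as the paper. The only cosmetic difference is bookkeeping: the paper argues by contrapositive that any SAW to a point outside the cone must have at least a fraction $\theta$ of its steps in $\mc{E}_-$ and then uses $N\ge n$ to start the geometric sum at $N=n$, whereas you derive the sharper bound $N^->\tfrac{1}{2}[(1-\kappa)N+(d-\kappa\sqrt d)\,n]$ and pull the $n$-dependent factor out front before summing over all $N$; both routes yield the same exponential estimate.
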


\medskip

With additional conditions on $\mu$, the constant $p_d$ in the above result may be improved slightly (see \cite{HS_DRE1}, as well as for a table of values for $\sigma_d$). When $d=2$, duality with an oriented percolation model (whose critical percolation probability is $p_c^{\smallOTSP}\approx .5956$) allow us to prove the following.

\medskip

\begin{LEM}
\label{lem:basiclemma}
Fix $d=2$.  Then $p_c(2)=p_c^{\smallOTSP}$. Moreover,  if $\mu(\gamma:\mc{S}(\gamma)\subset\mc{E}_+)>p_c(2)$ there exist constants $C,\kappa,\gamma>0$ such that 
\begin{align*}
\nu(\mathcal{C}_o\subset -n\ell_++\mc{K}_{\kappa,\ell_+})\ge 1-Ce^{-\gamma n}.
\end{align*}
\end{LEM}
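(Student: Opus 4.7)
The strategy is to exploit a planar duality in two dimensions between the forward cluster $\mc{C}_o$ and oriented site percolation on the triangular lattice (with step set $\{-e_1,-e_2\}$), whose critical density is precisely $p_c^{\smallOTSP}$. First, reduce to the extreme two-valued model $\mu(\mc{E}_+)=p$, $\mu(\mc{E})=1-p$ via the monotone coupling that enlarges every $\mc{G}_x\not\subset\mc{E}_+$ to the full set $\mc{E}$: enlarging arrow sets only enlarges $\mc{C}_o$, so it suffices to prove the cone estimate for $p>p_c^{\smallOTSP}$ in the extreme model. Call $x$ \emph{bad} if $\mc{G}_x=\mc{E}$; bad sites are i.i.d.\ Bernoulli with density $q=1-p<1-p_c^{\smallOTSP}$, so the oriented backward-percolation among the bad sites is strictly subcritical.

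In the extreme model, good sites carry only the two forward arrows, whereas bad sites carry all four. Decompose $\mc{C}_o$ by a recursive exploration: it contains the forward cone $o+\mathbb{N}_{\ge 0}^{2}$; at every bad site $x$ already in the exploration, adjoin the oriented $\{-e_1,-e_2\}$-percolation cluster of $x$ in the bad-site subgraph (its \emph{pure backward reach}) together with the forward cones of the newly discovered sites; then iterate. Under $q<1-p_c^{\smallOTSP}$ each pure backward reach is a subcritical oriented site-percolation cluster, whose $\ell_+$-depth and lateral width have exponentially decaying tails by standard subcritical oriented percolation estimates (as exploited in \cite{HS_DRE1,HS_DRE2}). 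A union bound over the at most polynomially many bad entry points per $\ell_+$-slab of the forward cone, combined with the exponential tail of each individual backward-reach, then yields that the probability of the recursion extending to $\ell_+$-depth $n$ (with bounded lateral displacement, guaranteed by choosing $\kappa$ small enough to absorb the typical cluster widths) is at most $Ce^{-\gamma n}$. This gives the desired tail bound and in particular $p_c(2)\le p_c^{\smallOTSP}$.

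For the reverse inequality $p_c(2)\ge p_c^{\smallOTSP}$, note that for $p<p_c^{\smallOTSP}$ the bad-site oriented backward percolation is supercritical, so with positive probability the forward cone of $o$ meets a bad site lying on an infinite backward-oriented cluster; such a cluster lies entirely in $\mc{C}_o$ and by the shape theorem for oriented percolation grows linearly in $-\ell_+$, precluding containment in any cone $-n\ell_++\mc{K}_{\kappa,\ell_+}$. The main technical obstacle is the recursive concatenation in the second step: individual subcritical reaches are exponentially small, but one must confirm their composition does not accumulate. Since every iteration consumes a fresh bad entry point in the current forward frontier and each such point seeds an independent subcritical backward cluster of finite expected depth, the compound process behaves like a subcritical branching structure and the exponential decay persists; an alternative and arguably cleaner route uses direct planar duality between the oriented forward connections of $\mc{C}_o$ and dual contours of bad sites, which would short-circuit the recursion and is likely the approach taken in \cite{HS_DRE1,HS_DRE2}.
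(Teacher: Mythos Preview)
Your approach diverges substantially from the paper's, and it contains a genuine gap that prevents it from delivering the sharp threshold $p_c(2)=p_c^{\smallOTSP}$.

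The paper does not analyse backward percolation of bad sites at all. It works instead with the \emph{good} sites (those with $\mc{G}_x\subset\mc{E}_+$) and performs oriented site percolation in the three-step direction set $\OTSP=\{-e_1,e_2,e_2-e_1\}$, i.e.\ on the triangular lattice. When $p>p_c^{\smallOTSP}$ this percolation is supercritical, and via Lemma~\ref{lem:abovealine} (resting on the Durrett-type estimates of Lemma~\ref{lem:Durrett}) one shows that with probability at least $1-Ce^{-\gamma n}$ there is a doubly-infinite $\OTSP$-path of good sites separating $o$ from the complement of $-n\ell_++\mc{K}_{\kappa,\ell_+}$. Such a path is an impenetrable barrier: every site on it carries only $+e_1,+e_2$ arrows, so no $\mc{G}$-path from $o$ can cross it. Cone containment follows immediately, with no recursion.

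Your argument, by contrast, tries to control $\mc{C}_o$ via subcritical $\{-e_1,-e_2\}$-percolation of bad sites. Two linked problems arise. First, the step set $\{-e_1,-e_2\}$ generates the \emph{square} lattice, so the relevant critical density is $p_c^{\smallNE}$, not $p_c^{\smallOTSP}$; there is no duality making $1-p_c^{\smallOTSP}$ the subcriticality threshold of that model. Second, and decisively, subcriticality of the individual backward reaches does not close the recursion. For any $p$ in the nonempty interval $(1-p_c^{\smallNE},\,p_c^{\smallOTSP})$ the bad-site $\{-e_1,-e_2\}$-clusters \emph{are} a.s.\ finite with exponential tails, yet cone containment fails (this is exactly the content of $p_c(2)=p_c^{\smallOTSP}$, from \cite{HS_DRE2}). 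So the recursive concatenation of finite backward excursions with fresh forward cones genuinely does accumulate; your branching heuristic cannot be salvaged at the sharp threshold. Your closing remark about planar duality points in the right direction but with the roles reversed: it is infinite paths of \emph{good} sites that serve as the blocking contours, and their supercritical existence---not the finiteness of bad clusters---is what singles out $p_c^{\smallOTSP}$.
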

%Let $p=\mu(\mathcal{G}_o=\NE)>p_c^{\smallOTSP}$, and let $\theta$ satisfy $\rho_p<\theta<-1$. $\exists$ constants $C$ and $\gamma$ such that $\nu(\mathcal{C}_x\subset x+n\Gamma)\ge 1-Ce^{-\gamma n}$ for every $n$ and $x$. 
%\end{LEM}

\medskip

Clearly Lemmas \ref{lem:SAW} and \ref{lem:basiclemma} imply Proposition \ref{prp:E+}.   Therefore to prove the proposition it is sufficient to prove each of the lemmas.

The proof of Lemma \ref{lem:SAW} is a relatively straightforward adaptation of the proof of \cite[(4.1)]{HS_DRE1}.

\medskip

\noindent {\em Proof of Lemma \ref{lem:SAW}}.
Set $p=\mu(\mathcal{G}_o\subset\mc{E}_+)$. Assume that $p>1-\sigma_d^{-2}$, in other words, that $\sqrt{1-p}<\frac{1}{\sigma_d}$. We may therefore find a $\theta<\frac12$ and a $\mu>\sigma_d$ such that $(1-p)^\theta <\frac{1}{\mu}$. Now choose $\kappa<1-2\theta$. If $c_N$ denotes the total number of $N$-step self avoiding walks from $o$ then we may, by definition of $\sigma_d$, find a constant $C$ such that $c_N\le C\mu^N$ for every $N$. 

Set $\Gamma=(-n\ell_+ + \mc{K}_{\kappa,\ell_+})^c$. Fix, for the moment, a lattice point $y\in\Gamma$ and self-avoiding path $(w_0,\dots,w_N)$ from $o=w_0$ to $y=w_N$. Clearly $N\ge n$ (since $\kappa<1$) 
Suppose that at most a fraction $\theta$ of the steps of the path are from $\mc{E}_-$. Then $y\cdot \ell_+\ge N(1-\theta) -N\theta=N(1-2\theta)>\kappa N$. We also have $\kappa<1<\sqrt{d}$, so 
$$
(y+n\ell_+)\cdot\ell_+
%=\sum(y_i+n)
\ge \kappa N +nd
\ge \kappa\|y\|+\kappa n\sqrt{d}
=\kappa\|y\|+\kappa\|n\ell_+\|
\ge\kappa\|y+n\ell_+\|.
$$
In other words, $y\in -n\ell_+ +\mc{K}_{\kappa,\ell_+}=\Gamma^c$, which is impossible. Therefore, at least $N\theta$ of the steps belong to $\mc{E}_-$, so the probability that this particular path is actually a $\mc{G}$-path is at most $(1-p)^{N\theta}$. 

If $\mc{C}_o$ intersects $\Gamma$ then there is a self-avoiding $\mc{G}$-path from $o$ to some point in $\Gamma$. By the above estimate, 
$$
\nu(\text{$\mc{C}_o$ intersects $\Gamma$})
\le \sum_{N=n}^\infty c_N(1-p)^{N\theta}
\le \sum_{N=n}^\infty C\Big(\mu(1-p)^\theta\Big)^N
=C'e^{-\gamma n}
$$
where $e^{-\gamma}=\mu(1-p)^\theta$.
\Qed

For the comparable result (in dimension $d=2$) our arguments rely on estimates for oriented percolation as in \cite{Dur84} (see Lemmas \ref{lem:Durrett} and \ref{lem:abovealine} below).  Recall that $p_c^{\smallOTSP}$ denotes the critical percolation parameter for oriented site percolation on the triangular lattice.
It is shown in \cite[Prop.~3.1]{HS_DRE2} that $\mathcal{C}_o$ has a lower boundary if and only if $p>p_c^{\smallOTSP}$.  Moreover by \cite[Theorem 1.5(III)]{HS_DRE2},
if $p>p_c^{\smallOTSP}$ then this boundary almost surely has an asymptotic slope of $\rho_p<-1$ in the northwest direction and $1/\rho_p>-1$ in the southeast direction, so $\#\{x\in \mc{C}_o:x\cdot \ell_+<0\}$ is almost surely finite.  Label any vertex $y$ as {\it open} if $\mathcal{G}_y\subset\NE$. An {\it open path} is a sequence of open vertices $y_i$ such that each $y_{i+1}-y_i\in \OTSP=\{-e_1,e_2,e_2-e_1\}$.  The idea is that 
%if we label each $\NE$ site as open, then 
an infinite oriented open path in both directions in the triangular lattice (generated by 
$(\leftrightarrow, \updownarrow, \Nwsearrow)$ lines) that passes below $o$, also passes below $\mc{C}_o$.

\medskip

\noindent{\em Proof of Lemma \ref{lem:basiclemma}.}
Let $p=\mu(\mathcal{G}_o\subset\NE)>p_c^{\smallOTSP}$ and choose $\theta$ such that $\rho_p<\theta<-1$. Let $L$ and $L'$ be the lines with slope $\theta$ and $1/\theta$ through $(-1,-1)$. Let $\Gamma$ denote the set of $x$ lying above both $L$ and $L'$. 
%In particular, observe that if $\mc{C}_o\subset n\Gamma$ then 
%\begin{equation}
%\label{eqn:confinement}
%\text{$X_j\cdot\ell_+ \ge -2n$ for every $j$.}
%\end{equation}

% We assume, without loss of generality, that $x=(1,1)$. 

%Choose $\epsilon$ so that $0<\epsilon<\frac{1}{\theta}-\theta$. Let $A$ and $A'$ be the line segments $\{(0,z): z\in[\frac{1}{\theta}-\epsilon,\frac{1}{\theta}]\}$ and $\{(z,0): z\in[\theta-\epsilon,\theta]\}$. 
Choose $\epsilon$ so that $0<\epsilon<\frac{1}{\theta}-\theta$. Let $A$ and $A'$ be the line segments $\{(0,z): z\in[\frac{1}{\theta}-1-\epsilon,\frac{1}{\theta}-1]\}$ and $\{(z,0): z\in[\frac{1}{\theta}-1-\epsilon,\frac{1}{\theta}-1]\}$. Therefore $A$ lies below $L'$ and above $L$, while $A'$ lies below $L$ and above $L'$. 

By Lemma \ref{lem:abovealine} below we may discard an event of probability at most $Ce^{-\gamma n}$ and obtain an infinite open path from some site in $nA$ that lies above $nL$. By symmetry, we may discard a further event of probability at most $Ce^{-\gamma n}$ and obtain an infinite open path terminating at some site in $nA'$ that lies above $nL'$. By construction, these paths must cross somewhere in $(-\infty,0]\times(-\infty,0]$, so following first one and then the other gives us an open path that is infinite in both directions. It lies above both $nL$ and $nL'$, and separates these lines from $o$. As remarked above, this implies that $\mathcal{C}_o\subset n\Gamma=-n\ell_++\mc{K}_{\kappa,\ell_+}$ for a suitable choice of $\kappa$.
\Qed

In the remainder of this section, we specialize to $d=2$, and will consider various estimates for oriented site percolation on the triangular lattice $\mathbb{Z}^{\sss(2)}$. We realize the latter using the vertices of $\mathbb{Z}^{2}$ connected by horizontal and vertical bonds, as well as by bonds of slope $-1$. Given $p$, let sites in $\mathbb{Z}^{\sss(2)}$ be open with probability $p$, independently of each other. 
As above, we call a sequence $(\dots, y_{-1},y_0, y_1, \dots)$ -- finite or infinite  -- an \emph{open path} if each $y_i$ is an open site, and each $y_{i+1}-y_i\in \OTSP=\{-e_1,e_2,e_2-e_1\}$.
For any site $x\in \mathbb{Z}^{\sss(2)}$, let its forward cluster $\mathbf{C}_x$ be the set of sites $y\in \mathbb{Z}^{\sss(2)}$ for which there is an open path starting at $x$ and ending at $y$.  Let $\mathbf{C}_x^\infty$ be the set of $y\in\mathbf{C}_x$ such that $|\mathbf{C}_y|=\infty$. For $A\subset \mathbb{Z}^{\sss(2)}$, set $\mathbf{C}_A=\cup_{x\in A}\mathbf{C}_x$ and $\mathbf{C}_A^\infty=\cup_{x\in A}\mathbf{C}_x^\infty$. In other words, each site in $\mathbf{C}_A^\infty$ can be reached from $A$ by an open path, and is then left via an infinite open path.

For $Y=\{(0,z)\in\mathbb{Z}^{\sss(2)}: z\le 0\}$ set $\bar{u}_n = \max\{y: (-n,y)\in\mathbf{C}_Y\}$. 
%For $A\subset\mathbb{Z}^{\sss(2)}$ let $u_n(A)=\max\{y: (-n,y)\in\mathbf{C}_A\}$ and $u_n^\infty(A)=\max\{y: (-n,y)\in\mathbf{C}_A^\infty\}$, and write $u_n(x)$ for $u_n(\{x\})$ and $u_n^\infty(x)$ for $u_n^\infty(\{x\})$.  [Check later to see if we actually use all this.] 
Also let $\tau_o=\sup\{y-x: (x,y)\in \mathbf{C}_o\}$, which measures the furthest diagonal line reached by the forward cluster of the origin.  
More generally, if $A\subset \mathbb{Z}^{\sss(2)}$, let $\tau_A=\sup\{y-x: (x,y)\in \mathbf{C}_A\}$.
Note that for $A$ finite, $|\mathbf{C}_A|=\infty\Leftrightarrow \tau_A=\infty$. If we wish to measure diagonal displacement relative to a point $z=(x_0,y_0)$ other than $o$, we will use $\tau_A^z=\tau_A-(y_0-x_0)$. 

Let $p_c^{\smallOTSP}$ denote the critical probability for oriented site percolation on the triangular lattice. The following bounds are known: 
$$
0.5699\le p_c^{\smallOTSP}\le 0.7491;
$$
the former is Corollary 6.3 of of \cite{HS_DRE2}, while the latter follows from the square lattice bound $\le p_c^{\smallNE}\le 0.7491$ of Balister et al \cite{BBS}, since $p_c^{\smallOTSP}\le p_c^{\smallNE}$. 

Fix $p>p_c^{\smallOTSP}$. Proposition 4.1 of \cite{HS_DRE2} (reformulating results in \cite{Dur84}) states that  there exists a $\rho_p<-1$ such that on the event $\{|\mathbf{C}_o|=\infty\}$, the set $\mathbf{C}_o$ has an upper boundary with asymptotic slope $\rho_p$ and a lower boundary with asymptotic slope $1/\rho_p$.  

\medskip

\begin{LEM}
\label{lem:Durrett}
Let $p>p_c^{\smallOTSP}$ and choose $\theta_1$ with $\rho_p<\theta_1<-1$. Then
\begin{enumerate}
\item $\exists$ a constant $\gamma_1>0$ such that $P(\bar u_n\le -n\theta_1)\le e^{-\gamma_1 n}$, $\forall n$;
\item $\exists$ constants $C_2$, $\gamma_2>0$ such that $P(n\le \tau_o<\infty)\le C_2e^{-\gamma_2 n}$, $\forall n$;
\item $\exists$ a constant $\gamma_3>0$ such that $P(\tau_A<\infty)\le e^{-\gamma_3 |A|}$, $\forall A\subset Y$.
\end{enumerate}
\end{LEM}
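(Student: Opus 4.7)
The three bounds are successively stronger large-deviation statements about supercritical oriented site percolation on the triangular lattice, and I would prove them in order, relying throughout on Durrett's edge-speed and cluster-size estimates \cite{Dur84} as reformulated in \cite{HS_DRE2}.

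For (1), I would view the horizontal coordinate as a discrete time parameter, so that $\bar u_n$ is the height of the upper edge at time $n$ of the oriented percolation process started from the half-line $Y$. By monotonicity of the cluster in the initial set, $\bar u_n$ stochastically dominates the corresponding upper edge at time $n$ started from the single site $o$ (restricted to the event of survival, which has positive probability $1-q$). The classical edge-speed theorem of Durrett, in the form of Proposition~4.1 of \cite{HS_DRE2}, supplies almost sure convergence of the single-site upper edge to $-\rho_p$ together with an exponential large-deviation bound at any strictly smaller speed; since $-\theta_1<-\rho_p$ by our choice of $\theta_1$, this delivers (1) after absorbing the $q$-factor.

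For (2), I first record that each admissible step $-e_1$, $e_2$, $e_2-e_1$ strictly increases the diagonal coordinate $y-x$, by $1$, $1$, and $2$ respectively. Consequently $\tau_o<\infty$ iff $|\mathbf{C}_o|<\infty$, and on $\{\tau_o\ge n\}$ any maximising open path from $o$ inside the finite cluster $\mathbf{C}_o$ has length at least $n/2$. The probability that a finite supercritical cluster contains a self-avoiding open path of length at least $n/2$ is exponentially small in $n$, by the classical exponential decay of finite cluster sizes (Durrett 1984; see also \cite{HS_DRE2}), which gives (2).

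For (3), the task is to upgrade from a single-site to an $|A|$-site bound. FKG is of no direct use, because $\{\tau_x<\infty\}$ is a decreasing event and hence yields only the wrong-direction lower bound $q^{|A|}$ on $P(\tau_A<\infty)$; BK is likewise unavailable, since the finite forward clusters of distinct $x\in A$ typically share sites. My plan is therefore to invoke the contour/duality picture for oriented site percolation on the triangular lattice developed in \cite{HS_DRE1,HS_DRE2}: the event $\{\tau_A<\infty\}$ is equivalent to the existence of a bi-infinite closed ``dual blocking path'' that separates $A\subset Y$ from the diagonal direction of infinity. When $p>p_c^{\smallOTSP}$ this dual process is subcritical with exponentially decaying connectivities, and any blocking path cutting off $|A|$ sites of $Y$ necessarily has length at least $c|A|$; a Peierls-type sum over candidate blocking paths then yields $P(\tau_A<\infty)\le e^{-\gamma_3|A|}$. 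I expect (3) to be the main obstacle: the single-site estimate from (2) is not subadditive in $|A|$ in any direct correlation-inequality sense, so one is driven either to the contour/duality description above, or to a delicate ``restart'' exploration that exploits the finite range of finite supercritical clusters to handle the sites of $A$ one at a time. Setting up the blocking-path duality carefully for this oriented lattice, and verifying the linear-in-$|A|$ growth of the shortest contour around $A\subset Y$, is where the real technical work will lie.
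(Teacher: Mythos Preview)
The paper does not prove this lemma at all: it simply cites Durrett \cite{Dur84}, identifying (a), (b), (c) with formulas (11.1), (12.1), and (10.5) of that paper respectively, and notes that the lattice translation has already been carried out in Section~4 of \cite{HS_DRE2}. So the intended ``proof'' is a one-line reference, not an argument.

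Your sketches are broadly faithful heuristics for how Durrett actually obtains these estimates, so there is no mathematical gap, but you are working far harder than the paper does. A couple of minor comments on the sketches themselves: for (a), the process started from the half-line $Y$ never dies, so the large-deviation bound on the upper edge from $Y$ is available directly (this is exactly Durrett's (11.1)); there is no need to reduce to the single-site process and condition on survival. For (c), you correctly diagnose that FKG and BK do not give the right direction, and your contour/blocking-path plan is in spirit the mechanism behind Durrett's (10.5); but be aware that this is precisely the content of the cited result, not a new obstacle you need to overcome. In short: replace your three arguments with the citation and move on.
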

\begin{proof}
These are all taken from \cite{Dur84}.  The lattice used there is different from ours, but it can be verified that the arguments all apply equally well in our setting. 
%\todo{Maak: Actually, I don't think we've written this out for (c). T: True but let's worry about that later.} 
See also Section 4 of \cite{HS_DRE2} where a similar translation is carried out. In particular, (a) is formula (11.1) of \cite{Dur84}, (b) is formula (12.1), and (c) is formula (10.5). 
\end{proof}

We will follow the convention that constants $C$ and $\gamma$ may change from line to line. If specific values are to be tracked, we will index them (as in the above result).

For $\theta_1$ as above, choose 
%$\epsilon$ and $\theta$ with $0<\epsilon<-\theta_1$ and
$\epsilon>0$, and $\theta$ with 
$\theta_1<\theta<-1$. Set $A_{n,\epsilon}=\{(n,y)\in\mathbb{Z}^{\sss(2)}: 0\le y\le\epsilon n\}$. Let $L$ be the line through $o$ with slope $\theta$, and let $L_n$ be the line through $(n,0)$ with slope $\theta_1$. We are interested in the event
$$
\mathcal{A}_{n,\epsilon,\theta}=\{\text{$\exists$ infinite open path, starting in $A_{n,\epsilon}$ and lying above $L$}\}.
$$

\medskip

\begin{LEM}
\label{lem:abovealine}
Let $p>p_c^{\smallOTSP}$. Choose $\theta_1$ and $\theta$ with $\rho_p<\theta_1<\theta<-1$, and choose $\epsilon>0$. There are constants $C>0$ and $\gamma>0$ such that $P(\mathcal{A}_{n,\epsilon,\theta})\ge 1-Ce^{-\gamma n}$ for every $n$. 
\end{LEM}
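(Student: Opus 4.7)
The plan is to combine Lemma \ref{lem:Durrett}(a), which controls the upper boundary of a half-line cluster, with Lemma \ref{lem:Durrett}(c), which guarantees that the cluster of $A_{n,\epsilon}$ is infinite with exponentially high probability. A preparatory observation is that since $\theta_1 < \theta < 0$, the line $L_n$ (through $(n,0)$ of slope $\theta_1$) lies strictly above $L$ on $\{x\le n\}$: the two lines meet only at $x = |\theta_1|n/(\theta - \theta_1) > n$, and at $x=0$ their values are $-\theta_1 n > 0$ and $0$ respectively. Hence it is enough to produce an infinite open path from $A_{n,\epsilon}$ that lies above $L_n$ everywhere.

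First I would invoke Lemma \ref{lem:Durrett}(c) with $A = A_{n,\epsilon}$, whose cardinality is at least $\epsilon n$: this gives $P(\tau_{A_{n,\epsilon}} < \infty) \le e^{-\gamma_3 \epsilon n}$, so with this probability some infinite open path starts in $A_{n,\epsilon}$. Next, I would apply Lemma \ref{lem:Durrett}(a) to the translate $\tilde Y_n := \{(n,z): z \le \epsilon n\}$ of $Y$ by $(n,\epsilon n)$; since $\tilde Y_n \supset A_{n,\epsilon}$, translation invariance yields
\[
P\bigl(\max\{y: (n-m, y) \in \mathbf{C}_{\tilde Y_n}\} \le \epsilon n - m\theta_1\bigr) \le e^{-\gamma_1 m}, \qquad \forall m \ge 1,
\]
which says that the upper envelope of $\mathbf{C}_{\tilde Y_n}$ at horizontal distance $m$ exceeds the height of $L_n$ (namely $-m\theta_1$) by at least $\epsilon n$, outside an event of probability $e^{-\gamma_1 m}$.

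\textbf{The main obstacle} is to pass from this upper-envelope control on $\mathbf{C}_{\tilde Y_n}$ to the existence of an infinite open path starting inside $A_{n,\epsilon}$ (as opposed to in $\tilde Y_n \setminus A_{n,\epsilon} = \{(n,z):z<0\}$) and lying above $L_n$. My approach would be to consider the topmost infinite path in $\mathbf{C}_{\tilde Y_n}$ -- obtained by always extending to the highest infinite-cluster child -- and to argue that it emanates from $A_{n,\epsilon}$ with high probability, because the cluster of the lower half-line $\{(n,z):z<0\}$ alone has upper envelope of slope only $|\rho_p|$ (by the shape theorem / upper large deviation estimates of \cite{Dur84}), so for $m$ up to order $n$ it cannot reach the height $\epsilon n + m|\theta_1|$ supplied by Lemma \ref{lem:Durrett}(a); for larger $m$ the already-exponential tail $e^{-\gamma_1 m}$ suffices. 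Once this separation of the two contributions is in hand, a union bound over $m$ (using summability of $e^{-\gamma_1 m}$) together with the Lemma (c) bound yields $P(\mathcal{A}_{n,\epsilon,\theta}^c) \le C e^{-\gamma n}$.
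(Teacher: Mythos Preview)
Your approach differs from the paper's and has a genuine gap at precisely the point you flag as the main obstacle. Lemma~\ref{lem:Durrett}(a) controls the \emph{upper envelope} of the half-line cluster $\mathbf{C}_{\tilde Y_n}$: at each column $n-m$ there is \emph{some} site in the cluster at height $\ge \epsilon n + m|\theta_1|$. But that site need not lie on any infinite open path---its forward cluster may well be finite---so there is no reason your ``topmost infinite path'' should track the envelope. Your separation argument (the lower half-line envelope has slope $|\rho_p|$ and hence cannot reach $\epsilon n + m|\theta_1|$ for small $m$) concerns only where the envelope is \emph{realised}, not where infinite paths subsequently go; and for $m$ beyond $\epsilon n/(|\rho_p|-|\theta_1|)$ the lower half-line envelope \emph{does} exceed $\epsilon n + m|\theta_1|$, while the tail $e^{-\gamma_1 m}$ you invoke bounds the wrong event (it bounds $\{\bar u_m\le m|\theta_1|\}$, not the event that an infinite path dips below $L_n$). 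You also appeal to an upper large-deviation bound on the envelope that is not among the three parts of Lemma~\ref{lem:Durrett}.

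The paper closes this gap by a rerouting argument that crucially uses part~(b) of Lemma~\ref{lem:Durrett}, which you do not invoke at all. It works column by column: for fixed $k\ge 0$, Lemma~(c) gives an infinite path $\sigma_1$ from $A_{n,\epsilon}$, and Lemma~(a) gives a (possibly finite) open path $\sigma_2$ from the half-line $\{(n,z):z\le 0\}$ to a point at column $-k$ lying above $L_n$. Let $x_3$ be the first point on $\sigma_2$ whose height exceeds $-\theta k$ (so $x_3$ lies above $L$). Since $\sigma_2$ continues far past $x_3$, one has $\tau_{x_3}^{x_3}\ge -\theta n + k(\theta-\theta_1)-1$, and Lemma~(b) then shows $\mathbf{C}_{x_3}$ is infinite except on an event of probability at most $C_2(n+k)e^{-\gamma_2(-\theta n + k(\theta-\theta_1))}$ (after a union over the possible locations of $x_3$). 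Now either $\sigma_1$ itself is above $L$ at column $-k$, or it dips below and must therefore cross $\sigma_2$ before $x_3$; rerouting along $\sigma_2$ to $x_3$ and then along an infinite path from $x_3$ exhibits a point of $\mathbf{C}_{A_{n,\epsilon}}^\infty$ at column $-k$ lying above $L$. Summing these exceptional probabilities over $k$ and then taking the maximal path through the resulting points yields the lemma. The essential idea you are missing is that Lemma~(b) is what converts ``the envelope reaches high'' into ``a specific high point has infinite cluster''; without it, the passage from envelope control to an infinite path staying above $L$ is not justified.
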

\begin{proof}
We will temporarily fix $k\ge 0$, and will estimate the probability that there exists a point $(-k,z)\in \mathbf{C}_{A_{n,\epsilon}}^\infty$ that lies above $L$. Constants below are as taken from Lemma \ref{lem:Durrett}

Discarding an event of probability at most  $e^{-\gamma_3\epsilon n}$, there is an infinite open path $\sigma_1$, starting from some point $x_1$ of $A_{n,\epsilon}$. Let $y_1=(-k,z_1)$ be the first site on $\sigma_1$ whose first coordinate equals $-k$. Discarding a further event, of probability at most $e^{-\gamma_1(k+n)}$ there is a also an open path $\sigma_2$ from some site $x_2=(n,z_2)$ with $z_2\le 0$, to a point $y_2=(-k, z_2')$ lying above $L_n$. Let $k_3$ be the first integer exceeding $-\theta k$, and let $x_3=(z_3,k_3)$ be the first site on $\sigma_2$ whose second coordinate exceeds $-\theta k$. If $\mathbf{C}_{x_3}$ is infinite, we claim that there will be a point $y=(-k,z)\in \mathbf{C}_{A_{n,\epsilon}}^\infty$. 

To see this, we know there is an infinite open path $\sigma_3$ starting at $x_3$. Let $y_3$ be the first site on $\sigma_3$ whose first coordinate equals $-k$. By construction, $y_3$ lies above $L$. If $y_1$ lies above $L$ then take $y=y_1\in \mathbf{C}_{A_{n,\epsilon}}^\infty$. If $y_1$ lies below $L$ then $\sigma_1$ crosses $\sigma_2$ before the latter reaches $x_3$. By following $\sigma_1$ from $x_1$ till it crosses $\sigma_2$, then $\sigma_2$ to $x_3$, and then $\sigma_3$, we see that we can take $y=y_3\in \mathbf{C}_{A_{n,\epsilon}}^\infty$. Either way, we have found our $y$.

Note that the lines of slope 1 through $x_3$ and $y_2$ are well separated. The closest they can be is when $x_3=(-k, k_3)$ and $y_2=(-k, -\theta_1(n+k))$,  so we always have $\tau_{x_3}^{x_3}\ge -\theta n + k(\theta-\theta_1)-1$. In particular, if $\mathbf{C}_{x_3}$ is finite, then $-\theta n + k(\theta-\theta_1)-1\le \tau_{x_3}^{x_3}<\infty$. But there are at most $n+k$ possible values for $x_3$. Taking a union over these values shows that
\begin{multline*}
1-P(\text{$\exists$ a point $y=(-k,z)\in \mathbf{C}_{A_{n,\epsilon}}^\infty$ that lies above $L$})\\
\le e^{-\gamma_3\epsilon n} + e^{-\gamma_1(k+n)}+C_2(n+k)e^{-\gamma_2(-\theta n + k(\theta-\theta_1)-1)}.
\end{multline*}
In fact, the first excluded event is common to all $k$, so summing over $k$ we get that 
\begin{align*}
&1-P(\cap_{k\ge 0}\{\text{$\exists$ a point $(-k,z)\in \mathbf{C}_{A_{n,\epsilon}}^\infty$ that lies above $L$}\})\\
&\qquad\qquad \le e^{-\gamma_3\epsilon n}+\sum_{k=0}^\infty[e^{-\gamma_1(k+n)}+C_2(n+k)e^{-\gamma_2(-\theta n + k(\theta-\theta_1)-1)}]\\
&\qquad\qquad \le e^{-\gamma_3\epsilon n}+ C[e^{-\gamma_1 n}+ne^{\gamma_2\theta n}] \le Ce^{-\gamma n}
\end{align*}
for some $C$, provided we choose $\gamma<\min(\gamma_1, -\gamma_2\theta, \gamma_3\epsilon)$. 
Under the above event, there are open paths from a single $x_1\in A_{n,\epsilon}$ to each such $(-k,z)$, so we can take the maximum over all these paths and obtain a single infinite path from $x_1$ that lies completely above $L$. In other words, $1-P(\mathcal{A}_{n,\epsilon,\theta})\le Ce^{-\gamma n}$, as required.
\end{proof}

\section{Ballisticity in the elliptic case}
\label{sec:elliptic}

For i.i.d. uniformly elliptic walks, there are a number of abstract conditions that imply ballisticity, starting with Kalikow's condition \cite{K81}. The proof of ballisticity in that context is due to Sznitman and Zerner \cite{SZ99}. In \cite{Sz01}, Sznitman introduces a condition weaker than Kalikow's, but which also implies ballisticity. He called this condition (T), and it is defined using exponential moments of the walk up to the regeneration time $\mc{T}$. Our condition \eqref{eqn:estimategoal} is therefore very similar in character. 

In \cite{Sz02} he formulated weaker conditions ($\text{T}'$) and $(\text{T})_\gamma$ that don't require knowledge of $\mc{T}$, but instead are based on the distributions of the walk prior to exiting from arbitrarily large slabs. \cite{Sz02} shows that  ballisticity holds under ($\text{T}'$), and as well that ($\text{T}'$) is equivalent to what is there called an ``effective condition''. In other words, a condition that can be verified by finding a large but finite box on which it holds. 

Berger, Drewitz and Ram\'irez show in \cite{BDR14} that ($\text{T}'$) is equivalent to $(\text{T})_\gamma$ for each $0<\gamma<1$, and moreover that these conditions are in turn equivalent to a polynomial decay condition $(\text{P})_{\text{M}}$ that is also effective. Uniform ellipticity is relaxed in \cite{CR13} and then further in \cite{BRS16}, where ballisticity is shown under $(\text{P})_{\text{M}}$, for elliptic (but not uniformly elliptic) walks. In those results, all directions have nonzero probability of being chosen, but certain directions are allowed to have probabilities that decay to zero in a controlled way. 

None of the above ballisticity conditions is strictly local, in the sense that it is formulated solely in terms of the law $\mu$ of the environment at a single site. In contrast, our Propositions \ref{prp:E+} and \ref{prp:transverse} together do provide such local conditions. In the uniformly elliptic case, the best known local condition is the following from \cite{K81}
\begin{equation}
\hat \varepsilon_\ell >0\text{ where } \hat\varepsilon_\ell=\inf_{f\in\mathcal{F}}\frac{E_\mu\Big[\frac{\sum_{e\in\mathcal{E}}\gamma(e)\ell\cdot e}{\sum_{e\in\mathcal{E}}\gamma(e)f(e)}\Big]}{E_\mu\Big[\frac{1}{\sum_{e\in\mathcal{E}}\gamma(e)f(e)}\Big]}.
\label{localKalikow}
\end{equation}
Here $\mathcal{F}$ denotes the set of nonzero functions on $\mathcal{E}$ with values in $[0,1]$. In the presence of uniform ellipticity this implies Kalikow's condition and hence ballisticity (note that \cite{Sz02} differentiates between \eqref{localKalikow} and Kalikow's condition by calling the former {\it Kalikow's criterion}). 

We wish to compare \eqref{localKalikow} with our conditions, and understand what it tells us about Example \ref{exa:NE_SW}. Note that $\hat\varepsilon_\ell$ is a lower bound for a quantity $\varepsilon_\ell$ that arises in Kalikow's condition, which in turn is a lower bound for $v\cdot\ell$, so \eqref{localKalikow} implies $v\cdot\ell>0$. 
Since the above results depend on uniform ellipticity, which fails for Example \ref{exa:NE_SW}, we work with the following uniformly elliptic version instead:

\medskip

\begin{EXA}[modified 2-d orthant model]
$\mu_{\epsilon,\delta}$ is 2-valued, with $\mu(\gamma^{\sss(1)})=p$ and $\mu(\gamma^{\sss(2)})=1-p$; $\gamma^{\sss(1)}(e_1)=\gamma^{\sss(1)}(e_2)=\frac{1-\epsilon}{2}$ and
$\gamma^{\sss(1)}(-e_1)=\gamma^{\sss(1)}(-e_2)=\frac{\epsilon}{2}$; 
$\gamma^{\sss(2)}(e_1)=\gamma^{\sss(2)}(e_2)=\frac{\delta}{2}$ and
$\gamma^{\sss(2)}(-e_1)=\gamma^{\sss(2)}(-e_2)=\frac{1-\delta}{2}$.
\end{EXA}

\medskip

In other words, we add back the missing directions to $S(\gamma^{\sss(1)})$ and $S(\gamma^{\sss(2)})$, with probabilities $\epsilon$ and $\delta$ respectively. Let $\ell=e_1+e_2$. We will examine the range of $p$ for which \eqref{localKalikow} holds while letting $\epsilon\downarrow 0$ or $\delta\downarrow 0$. 

By symmetry (i.e.~$\gamma^{\sss(i)}(e_1)=\gamma^{\sss(i)}(e_2)$, $\gamma^{\sss(i)}(-e_1)=\gamma^{\sss(i)}(-e_2)$), we may assume that $f(e_1)=f(e_2)=a$ and $f(-e_1)=f(-e_2)=b$. So $(a,b)\in F=[0,1]^2\setminus\{(0,0)\}$. 
We get that
$$
\hat\varepsilon_\ell =\inf_{(a,b)\in F}\frac{p(1-2\epsilon)[\delta a+(1-\delta)b]-(1-p)(1-2\delta)[(1-\epsilon)a + \epsilon b]}{p[\delta a+(1-\delta)b]+(1-p)[(1-\epsilon)a + \epsilon b]}.
$$
This fraction has the form $\frac{Aa+Bb}{Ca+Db}$, and an elementary calculation shows that $AD-BC=-2p(1-p)(1-\epsilon-\delta)^2\le 0$. From this it follows that the fraction is 
$\downarrow$ in $a$ and $\uparrow$ in $b$, so the infimum occurs at $(1,0)$, giving
$$
\hat\varepsilon_\ell = 
\frac{(p(1-2\epsilon)\delta - (1-p)(1-2\delta)(1-\epsilon))}{(p\delta+(1-p)(1-\epsilon))}.
$$
Restricting attention to the case $\epsilon<\frac12$ and $\delta<\frac12$, \eqref{localKalikow} becomes that
$$
\frac{p}{1-p}>\frac{(1-2\delta)(1-\epsilon)}{(1-2\epsilon)\delta}.
$$
In other words, sending $\epsilon\downarrow 0$ is inconsequential for \eqref{localKalikow}; it only expands the range of $p$ for which \eqref{localKalikow} implies ballisticity in direction $\ell=(1,1)$. But when $\delta\downarrow 0$, the condition becomes increasingly restrictive;  for there to be any $\epsilon\in(0,\frac12)$ for which \eqref{localKalikow} holds, we require $p>1-\frac{\delta}{1-\delta}$.  The right hand side approaches 1 as $\delta \downarrow 0$. 

We can interpret this observation as saying that the absence of arrows $\NE$ in environment $\gamma^{\sss(2)}$ creates an insurmountable obstacle for obtaining ballisticity in direction $\ell=(1,1)$ via condition \eqref{localKalikow}. The barriers $\SW$ are too strong for  \eqref{localKalikow} to handle. One way of interpreting our main result is that we can overcome the presence of strong barriers $\SW$  by strengthening the forward push, i.e.~including sufficiently many sites $\NE$ that don't permit backwards motion.

\section*{Acknowledgements}
Holmes's research was supported in part by the Marsden Fund, administered by RSNZ. Salisbury's research is supported in part by NSERC.

%\newpage

%\bibliography{C:/Users/mhol003/Desktop/texfiles/maaak}
\bibliographystyle{plain}

\end{document}